\documentclass[a4paper]{amsart}
\usepackage[latin9]{inputenc}
\usepackage{amsfonts}
\usepackage{amssymb}
\usepackage{amsmath}
\usepackage{amsthm}
\usepackage{geometry}
\usepackage{verbatim}
\usepackage[toc,page]{appendix}
\pagestyle{headings}
\linespread{1.2}
\parskip0.5em
\begin{document}
\providecommand{\keywords}[1]{\textbf{\textit{Keywords: }} #1}
\newtheorem{theorem}{Theorem}[section]
\newtheorem{lemma}[theorem]{Lemma}
\newtheorem{prop}[theorem]{Proposition}
\newtheorem{corollary}[theorem]{Corollary}
\theoremstyle{definition}
\newtheorem{defi}[theorem]{Definition}
\theoremstyle{remark}
\newtheorem{remark}[theorem]{Remark}
\newtheorem{example}[theorem]{Example}
\newtheorem{problem}[theorem]{Problem}
\newtheorem{question}[theorem]{Question}
\newtheorem{conjecture}[theorem]{Conjecture}
\newtheorem{condenum}[theorem]{Condition}

\newcommand{\cc}{{\mathbb{C}}}   
\newcommand{\ff}{{\mathbb{F}}}  
\newcommand{\nn}{{\mathbb{N}}}   
\newcommand{\qq}{{\mathbb{Q}}}  
\newcommand{\rr}{{\mathbb{R}}}   
\newcommand{\zz}{{\mathbb{Z}}}  
\newcommand{\fp}{{\mathfrak{p}}}

\title{On the mod-$p$ distribution of discriminants of $G$-extensions}
\author{Joachim K\"onig}
\email{jkoenig@knue.ac.kr}
\address{Korea National University of Education, Department of Mathematics Education, 28173 Cheongju, South Korea}
\begin{abstract}
This paper was motivated by a recent paper by Krumm and Pollack (\cite{KP18}) investigating modulo-$p$ behaviour of quadratic twists with rational points of a given hyperelliptic curve, conditional on the abc-conjecture. We extend those results to twisted Galois covers with arbitrary Galois groups. The main point of this generalization is to interpret those results as statements about the sets of specializations of a given Galois cover under restrictions on the discriminant. In particular, we make a connection with existing heuristics about the distribution of discriminants of Galois extensions such as the Malle conjecture: our results show in a precise sense the non-existence of ``local obstructions" to such heuristics, in many cases essentially only under the assumption that $G$ occurs as the Galois group of a Galois cover defined over $\qq$. This complements and generalizes a similar result in the direction of the Malle conjecture by D\`ebes (\cite{Debes17}).\end{abstract}
\keywords{Galois extensions; number fields; discriminant; distribution heuristics; Galois covers} 
\maketitle

\section{Introduction}
\subsection{Distribution of Galois extensions}
Let $G\le S_n$ be a finite transitive permutation group, always assumed non-trivial in the following. By a $G$-extension of $\qq$, we mean a degree-$n$ extension whose Galois closure has group permutation-isomorphic to $G$. The distribution of discriminant values of $G$-extensions of $\qq$ is one of the great mysteries of inverse Galois theory.
In \cite{Malle02}, Malle presented a precise conjecture on the asymptotic number of $G$-extensions with a bounded discriminant:
\begin{conjecture}[Malle]
\label{conj:malle}
Fix a finite group $G$ and $\epsilon>0$. Let $\mathcal{N}(G,B)$ be the number of Galois extensions of $\mathbb{Q}$ with discriminant of absolute value $\le B$. Then there exist constants $c_1(G)$ and $c_2(G,\epsilon)$ such that 
$$c_1(G) B^{\alpha(G)} \le \mathcal{N}(G,B) \le c_2(G,\epsilon) B^{\alpha(G)+\epsilon},$$
for all sufficiently large $B$.\\
Here $\alpha(G):= (\min_{1\ne g\in G} {\textrm{ind}}(g))^{-1}$, where the index ${\textrm{ind}}(g)$ of a permutation $g$ is defined as the minimal number of transpositions in a product representing $g$.
\end{conjecture}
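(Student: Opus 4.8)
The plan is to treat the two inequalities separately, since producing many $G$-extensions and bounding their number from above rest on essentially disjoint toolkits; it should be noted at once that in full generality both halves remain open, so what follows is really the framework into which every known case is fitted.

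For the lower bound the strategy is to fix an element $g_0$ attaining $\min_{1\neq g\in G}\mathrm{ind}(g)$ and to exhibit a family of $G$-extensions in which the ramification whose inertia generates a conjugate of $\langle g_0\rangle$ supplies the main term $B^{\alpha(G)}$, while all remaining ramification contributes only a lower-order factor. Three incarnations of this are available: (i) when $G$ is abelian one writes down the relevant Dirichlet series by class field theory and Kummer theory and applies a Tauberian theorem, which is Wright's theorem; (ii) for $G=S_n$ with $n\le 5$ one parametrizes rings of rank $n$ by integral orbits (Delone--Faddeev, Bhargava) and counts lattice points in a fundamental domain; (iii) in general one realizes $G$ as the monodromy group of a cover $X\to\mathbb{P}^1_{\qq}$ with an inertia generator of index $\mathrm{ind}(g_0)$ over a single branch point and then specializes via Hilbert irreducibility, estimating the discriminant contribution away from that branch point. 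This third route is the viewpoint of D\`ebes \cite{Debes17} that the present paper extends; the delicate point is separating the tame contribution one wants from wild ramification one cannot afford, and guaranteeing that the specialized number fields are pairwise non-isomorphic as $B\to\infty$.

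For the upper bound the plan is the Schmidt / Ellenberg--Venkatesh counting scheme: fix the embedding $G\hookrightarrow S_n$, bound the number of degree-$n$ fields of a given discriminant by the geometry of numbers (such a field has a primitive element of height $O(|d|^{1/(2(n-1))})$), and then use the group structure of $G$ together with the distribution of the minimal-index ramification to replace the resulting crude exponent by $\alpha(G)+\epsilon$. The obstacle here is precisely that the geometry-of-numbers exponent is far from $\alpha(G)$ outside low degree, so beyond those cases this step needs either new parametrizations of the pertinent torsors or serious input about Hurwitz spaces.

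The genuinely hard point, and the reason the conjecture is out of reach even for many solvable $G$, is that there is no uniform mechanism linking the arithmetic of a single cover to the full asymptotic count, and even the correct refined statement is subtle: Kl\"uners's examples show the naive prediction for the power-of-$\log$ secondary term can fail. Accordingly any honest argument at present is either conditional---on $abc$, as in Krumm--Pollack \cite{KP18}, which is the line pursued in this paper---or restricted to groups admitting an explicit parametrization or a class-field-theoretic description.
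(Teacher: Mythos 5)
The statement you were asked about is Malle's conjecture itself, which the paper states purely as background motivation and does not prove --- it is a famous open problem, known only in special cases (abelian and, more generally, nilpotent groups in regular action by Kl\"uners--Malle, and a handful of small-degree permutation groups via Bhargava's parametrizations). There is therefore no ``paper's own proof'' to compare against, and your submission, as you yourself acknowledge in its first sentence, is not a proof but a survey of the known partial approaches. That is the genuine gap: nothing in your text establishes either inequality for a general finite group $G$, and no argument currently known does. In particular, route (iii) of your lower-bound discussion (realize $G$ as a regular Galois group, force inertia of minimal index over one branch point, specialize via Hilbert irreducibility) does not yield the exponent $\alpha(G)$: the specializations acquire additional tame ramification coming from the other branch points, and controlling the resulting discriminant well enough to reach $B^{\alpha(G)}$ rather than some smaller positive power is exactly what is open. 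D\`ebes's result \cite{Debes17}, and the present paper's Theorem \ref{thm:main}, only produce $\gg B^{\alpha}$ extensions for \emph{some} unspecified $\alpha>0$ depending on the cover, which is strictly weaker than the conjectured exponent. Likewise your upper-bound sketch concedes that the Schmidt/Ellenberg--Venkatesh exponent is far from $\alpha(G)+\epsilon$ outside low degree.

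Your survey is otherwise accurate on the main points (Wright for abelian groups, Bhargava in degrees $4$ and $5$, Kl\"uners's counterexample affecting only the logarithmic refinement and not the $\epsilon$-form stated here), and it correctly identifies that the paper's actual contribution is not Conjecture \ref{conj:malle} but the much weaker ``local'' statement, Conjecture \ref{conj:localmalle}, which the author proves conditionally on $abc$ for regular Galois groups. If your intent was to prove something, the object to aim at was Conjecture \ref{conj:localmalle} via Theorem \ref{thm:main}, not Conjecture \ref{conj:malle} itself.
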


The conjecture in this form is known in full for nilpotent groups in regular action (\cite{KlM}), and for some very small degree groups (e.g., \cite{Bhargava}, \cite{Bhargava2}), but wide-open in general.

We note here that the exponent $\alpha(G)$ is due to the fact that (tamely) ramified primes are weighted differently by the discriminant, depending on the cycle structure of their inertia group generator. This weighting is not always the ideal one for counting purposes, and in fact more general weightings were proposed e.g.\ in \cite{EV}. In the following, we will not distinguish between different inertia groups, i.e., use a ``trivial" weighting. This amounts to counting $G$-extensions $F/\qq$ by their {\textit{reduced discriminant}} $\delta(F/\qq)$, which we define as the product of all ramified prime numbers without exponents.\footnote{There are also slightly different definitions of reduced discriminants. We use the above notion due to our motivation, but note that our proof in Section \ref{sec:proof} requires only that a prime $p$ strictly divides (resp., does not divide) $\delta$ if $p$ is tamely ramified (resp., unramified) and that the exact exponent of $p$ in $\delta$ is determined by the completion of $F/\qq$ at $p$.} Note that the reduced discriminant does not change upon taking Galois closures, whence in the following we will consider Galois extensions (i.e., $G$ in its regular action) unless stated otherwise.

The analog of (the lower bound in) Malle's conjecture for this situation (in fact, already implied in Malle's counting heuristics) then states that the number of $G$-extensions of reduced discriminant at most $B$ should be at least asymptotic to $cB$ for some positive constant $c$.

This expectation is essentially based on two general heuristic assumptions, suggested by Malle (see \cite{Malle04}) and later extended by, e.g., work of Bhargava:
a) the assumption that all squarefree positive integers (apart from trivially obstructed ones) should have a ``positive probability" to occur as reduced discriminants of $G$-extensions, and b) the additional assumption that any cyclic subgroup of $G$ should occur as inertia group at a prime $p$ with positive probability (as long as there is no trivial local obstruction at $p$), thereby accordingly increasing the expected total number of $G$-extensions of a fixed discriminant value. To make these intuitive statements concrete, consider as a toy example the group $G=C_3$. Here, only primes $\equiv 1$ mod $3$ (and $3$ itself) can ramify in $G$-extensions, so squarefree integers divisible by any prime $\equiv 2$ mod $3$ are ``trivially obstructed". The group-theoretical reason is that the non-trivial conjugacy classes of $C_3$ are not $\qq$-rational. On the other hand, this is really the only obstruction in this case, since all squares of squarefree integers all of whose prime factors are congruent to $1$ modulo $3$ do occur as discriminants of $C_3$-extensions.
\footnote{Indeed, if $n=p_1\cdots p_r$ with $p_i\equiv 1$ mod $3$ for all $i=1,\dots, r$, then each cyclotomic extension $\qq(\zeta_{p_i})/\qq$ contains a $C_3$-subextension ramified only at $p_i$. Their compositum has group $C_3^r$, and the fixed field of the augmentation ideal is a $C_3$-extension of $\qq$ ramified exactly at all the $p_i$.}
 Integers with these restrictions on prime factors would still form a density zero subset of all integers; however, integers with more than one prime factor occur several times (according to the different possibilities of inertia conjugacy classes), bringing the total count $\mathcal{N}(C_3,B)$ to $c\cdot B$ asymptotically.

More generally, given a group $G$ and a non-trivial conjugacy class $C$ of elements of order $e$, the class $C$ has a field of rationality $K(C)\subseteq \qq(\zeta_e)$, meaning that only primes which split in $K(C)/\qq$ can be tamely ramified with inertia group generator in $C$ in any $G$-extension. We then denote by $\mathcal{P}(G)$ the set of all primes which split completely in at least one field $K(C)$ for a non-identity conjugacy class of $G$.

While the assumption b) above is responsible for the concrete exponent in Malle's conjecture, part a) is interesting in its own right. 
Below, we present a) formally as a conjecture in general form.

\begin{conjecture}
\label{conj:malle2}
Let $G$ be a finite group, let $\mathbb{N}_G$ be the set of all squarefree integers all of whose prime factors either divide $|G|$ or lie in $\mathcal{P}(G)$, and let $S(G)$ be the set of all positive integers which occur as the reduced discriminant of a $G$-extension. Then $S(G)$ has positive density inside $\mathbb{N}_G$.
\end{conjecture}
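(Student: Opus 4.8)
The plan is to derive Conjecture~\ref{conj:malle2} --- conditionally on the $abc$-conjecture, and under the hypothesis that $G$ occurs as the Galois group of a regular extension of $\qq(T)$ (without which $S(G)$ may be empty, so the statement should be read as including such a realizability assumption) --- from an analysis of reduced discriminants of specializations of Galois covers defined over $\qq$. Fix a regular $G$-Galois branched cover $f\colon X\to\mathbb P^1_\qq$ with branch locus $\mathbf t=\{t_1,\dots,t_r\}$ and canonical inertia classes $C_1,\dots,C_r$. The basic tool is the classical dictionary (Beckmann; D\`ebes--Fried) between branch points and ramification in specializations: for $t_0\in\qq$ outside $\mathbf t$ and outside a fixed finite set $S_0$ of bad primes (those dividing $|G|$ or the discriminant of a chosen integral model), a prime $p$ is tamely ramified in the specialization field $F_{t_0}/\qq$ exactly when $t_0$ meets the branch locus modulo $p$, the inertia being generated by a power of an element of the relevant class $C_i$ with exponent read off from the $\fp$-adic distance of $t_0$ to $t_i$. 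Hence $\delta(F_{t_0}/\qq)$ equals, up to a bounded factor supported on $S_0$, the radical of $D_f(t_0)$, where $D_f\in\zz[T]$ is the discriminant polynomial of the model. Note also that every prime ramifying in any $G$-extension of $\qq$ necessarily lies in $\mathcal P(G)\cup\{p:p\mid|G|\}$, so that $S(G)\subseteq\nn_G$ unconditionally; the content of the conjecture is therefore the lower density bound.

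The engine is a local analysis at the primes of $\mathcal P(G)\cup\{p:p\mid|G|\}$ combined with an $abc$-conditional squarefree sieve. Given $p\in\mathcal P(G)$, pick a non-identity class $C_i$ with $p$ split completely in $K(C_i)$; then, over a suitable unramified extension of $\qq_p$ in which the cover near $t_i$ looks like $u\mapsto u^{e_i}$, one exhibits a non-empty $p$-adically open set of $t_0$ whose specialization is ramified at $p$ with inertia a generator of $\langle C_i\rangle$, and, just as easily, a non-empty open set for which $p$ is unramified. Consequently, for any finite $T\subseteq\mathcal P(G)\cup\{p:p\mid|G|\}$ and any admissible assignment of ramification data at the primes in $T$, the $t_0\in\qq$ realizing it form a non-empty open set, whose integral points may be confined to a single residue class modulo $\prod_{p\in T}p^{N_p}$. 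Letting $t_0$ run over the resulting arithmetic progression and invoking Granville's $abc$-conditional theorem that a separable polynomial in $\zz[T]$ takes squarefree values with the expected positive density along such progressions, we get that $D_f(t_0)$ is squarefree away from $T\cup S_0$ for a positive proportion of $t_0$; thus these specializations realize reduced discriminants of the shape $r_T\cdot m$, where $r_T$ is the fixed radical supported on $T$ and $m$ is squarefree, coprime to $r_T$, and automatically built from primes of $\mathcal P(G)\cup\{p:p\mid|G|\}$. This already proves the ``no local obstruction'' assertion: the local constraints defining $\nn_G$ are the only ones.

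The main obstacle is the passage from ``a positive proportion of the $t_0$'' to ``a positive proportion of $\nn_G$''. When $G=C_2$ one may take $f\colon y^2=t$, so $\deg D_f=1$, and the engine directly gives $S(C_2)=\nn_{C_2}$ up to density $1$. But for every other $G$, a regular $G$-cover over $\qq$ has at least two finite branch points --- descent to $\qq$ forces the inertia datum to be $\qq$-rational as a whole, which with at most two branch points (and hence cyclic monodromy) is possible only for $G=C_2$ --- so $\deg D_f\ge2$; the specializations with $t_0$ of height $\le N$ then produce only $\asymp N$ distinct reduced discriminants (the map $t_0\mapsto\delta(F_{t_0})$ being finite-to-one), all lying in $[1,CN^{\deg D_f}]$, a range in which $\nn_G$ already has $\gg N^{\deg D_f}/(\log N)^{O(1)}$ elements. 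A single cover therefore fills only a density-zero subset of $\nn_G$, and one must amalgamate a whole family: the twists of $f$ through D\`ebes's self-twisted cover (\cite{Debes17}), whose specializations realize $\{f^\gamma_{t_0}\}$ with reduced discriminant governed by the compositum of $\delta(F_{t_0})$ and the discriminant of the twisting $G$-extension; or, for abelian $G$, the cyclotomic composita of the footnotes above; or a Hurwitz family with many $\qq$-points. The difficult point --- and the reason the conjecture remains open even under $abc$ --- is to make such an amalgamation quantitatively efficient: the $abc$-conditional squarefree densities of the members must be bounded below uniformly, the overlaps among their discriminant sets must be controlled, and the family must be rich enough in $\qq$-points that the union of its specialization discriminants has positive density in $\nn_G$ for all large cut-offs simultaneously, not merely within a moving window. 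Establishing these uniformities --- in effect a uniform, family-theoretic strengthening of the Krumm--Pollack equidistribution results (\cite{KP18}) --- is where essentially the whole difficulty resides.
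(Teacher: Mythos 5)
The statement you were asked to prove is Conjecture~\ref{conj:malle2}, which the paper states as an \emph{open conjecture} and does not prove. The paper's actual results (Theorem~\ref{thm:main}, Corollary~\ref{cor1}) establish only the weaker ``local'' statement, Conjecture~\ref{conj:localmalle}: the absence of all but finitely many local obstructions to reduced discriminants of specializations, which the introduction explicitly describes as \emph{necessary but not sufficient} for the positive-density claim of Conjecture~\ref{conj:malle2}. So there is no proof in the paper against which to match yours.

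Your proposal, read as a proof, has a genuine gap --- and to your credit you identify it yourself. The first two paragraphs correctly reconstruct the paper's machinery (the Specialization Inertia Theorem, control of bad primes by $p$-adic approximation, and the abc-conditional squarefree sieve of Granville/Poonen along arithmetic progressions); this yields exactly what the paper proves, namely that every admissible residue class modulo finitely many primes is hit by reduced discriminants of specializations. But your third paragraph is where the proof would have to happen and does not: as you observe, for any $G\ne C_2$ a single $\qq$-regular cover has at least three branch points, so the branch-point polynomial $F$ has degree at least $3$ and the specializations of height $\le N$ produce far fewer distinct reduced discriminants than $\nn_G$ has elements in the corresponding range $[1,CN^{\deg F}]$ --- consistent with the paper's lower bound of only $B^{\alpha}$ extensions of discriminant $\le B$ for some $\alpha>0$, which is density zero in $\nn_G$. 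The proposed remedies (amalgamating over twists, self-twisted covers, Hurwitz families) are sketched without any of the required uniformity in the squarefree densities or control of overlaps, and you explicitly concede that ``essentially the whole difficulty resides'' there. That concession is accurate: what you have written is a correct diagnosis of why the conjecture is hard and of what the paper does and does not establish, but it is not a proof of the statement, conditionally on abc or otherwise.
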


As seen above, the somewhat delicate wording of Conjecture \ref{conj:malle2} is due to the presence of certain {\textit{local obstructions}} for values of reduced discriminants. Formally, given a prime $p$ 
and a residue class $k$ mod $p$, we call $k$ mod $p$ a local obstruction for reduced discriminants of $G$-extensions, if no $G$-extension $F/\qq$ has reduced discriminant congruent to $k$ mod $p$. If $p\notin \mathcal{P}(G)$ and $p\not{|} |G|$, then $0$ mod $p$ is a local obstruction as explained above; we call this case a {\textit{trivial local obstruction}}. In order to reach the density predicted by Conjecture \ref{conj:malle2}, it is necessary (although of course not sufficient) that there are not too many local obstructions. Indeed, the average number of non-trivial local obstructions at a prime $p\le N$ needs to converge to $0$ for $N\to\infty$, since every local obstruction at $p$ decreases the candidate set of reduced $G$-discriminants $\le N$ (with $N$ sufficiently large) by a factor $(1-1/p)$, and $\prod_{p\in S(N)\subset\{1,...,N\}}(1-1/p)\to 0$ for any positive density subsets $S(N)$ of $\{1,...,N\}$.

In fact, it seems reasonable to conjecture:
\begin{conjecture}[``Local Pre-Malle"]
\label{conj:localmalle}
Let $G$ be a finite group. Given any prime $p$, not in some finite set $S_0$ depending on $G$, and an integer $k \not\equiv 0$ mod $p$,
there exists a $G$-extension of $\qq$ whose reduced discriminant $\delta$ fulfills $\delta \equiv k$ mod $p$. Furthermore, if $p\in \mathcal{P}(G)$, then the same holds for the zero residue class. \\
In other words, there are only finitely many local obstructions for reduced discriminants of $G$-extensions, apart from the trivial ones.
\end{conjecture}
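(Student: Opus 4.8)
The plan is to prove the conjecture under the $abc$-conjecture together with the assumption that $G$ is a regular Galois group over $\qq$, that is, the Galois group of a $G$-Galois branched cover $\phi\colon X\to\mathbb{P}^1_\qq$; this is, in essence, the strongest form in which one can expect to establish it at present.

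\emph{A discriminant formula for specializations.} Fix such a $\phi$, with function field $E$ (a regular $G$-extension of $\qq(T)$), and for $t\in\qq$ away from the branch locus write $E_t/\qq$ for the specialization. The cover has $d\ge 2$ branch points; let $B(u,v)\in\zz[u,v]$ be the separable binary form of degree $d$ cutting out the reduced branch divisor on $\mathbb{P}^1$, and let the inertia canonical generators at the branch points have orders $e_j\ge 2$. Let $S_E$ be the finite set of primes dividing $|G|$, the leading coefficient or discriminant of $B$, or one of the finitely many primes of bad reduction of $\phi$, and fix a modulus $M$ supported on $S_E$ whose prime powers are large enough that, for $q\in S_E$, the $\qq_q$-algebra $E_t\otimes_\qq\qq_q$ depends only on $t\bmod M$. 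A specialization--inertia analysis in the spirit of Beckmann, compatible with the properties of $\delta$ recalled in the footnote, then shows: for $t=u/v$ with $\gcd(u,v)=1$, with $u/v$ in a suitable Hilbert subset $\mathcal H\subseteq\mathbb{P}^1(\qq)$ so that $E_t/\qq$ is a genuine $G$-extension, and with $u/v$ in a fixed residue class modulo $M$, the primes outside $S_E$ ramifying in $E_t$ are exactly those dividing $B(u,v)$, each tamely and with cyclic inertia, while the $S_E$-part of $\delta(E_t/\qq)$ is a fixed divisor $\delta_0$ of $M$. In particular
\[
\delta(E_{u/v}/\qq)=\delta_0\cdot|B(u,v)|\qquad\text{whenever $B(u,v)$ is squarefree and prime to $M$.}
\]

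\emph{Hitting a prescribed nonzero residue.} Fix a prime $p$ outside a sufficiently large finite set $S_0\supseteq S_E$, and a target $k\not\equiv 0\bmod p$. The point that rules out local obstructions at $p$ is soft: $B$ is separable of fixed degree $d\ge 2$, so for all but finitely many $p$ the reduction $B\bmod p$ is separable and represents every nonzero class modulo $p$ --- for $d=2$ because a nondegenerate binary quadratic form over $\ff_p$ represents every element of $\ff_p^\times$, and for $d\ge 3$ by Weil's bound applied to the smooth affine plane curves $B(x,y)=c$, $c\ne 0$. Let $\varepsilon\in\{+1,-1\}$ be the sign of $B$ on the pairs to be selected --- constant if $B$ is definite, and otherwise imposed by the positive-density condition $B(u,v)>0$ --- and pick $(a,b)\in\ff_p^2$ with $B(a,b)\equiv\varepsilon\,k\,\delta_0^{-1}\bmod p$, so that $|B(u,v)|\equiv k\,\delta_0^{-1}\bmod p$ for $(u,v)$ reducing to $(a,b)$. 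To lift $(a,b)$ to integers while keeping $B(u,v)$ squarefree, one uses a theorem of Granville type, a consequence of $abc$: the coprime pairs $(u,v)\in\zz^2$ in a prescribed residue class modulo a fixed integer for which $B(u,v)$ is squarefree and prime to that integer have positive density, provided the class carries no local square-obstruction. Take the class modulo $pM$ given by $(a,b)$ modulo $p$ and by a class modulo $M$ on which $B$ is prime to $M$ (there is then no square-obstruction at the primes of $S_E$, and none at $p$ since $p\nmid B(u,v)$). Since the complement of $\mathcal H$ is thin, hence of density zero, the resulting set still meets $\mathcal H$, and any $(u,v)$ in it gives a $G$-extension $E_{u/v}/\qq$ with $\delta(E_{u/v}/\qq)=\delta_0\,|B(u,v)|\equiv k\bmod p$.

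\emph{The zero residue, and the main obstacle.} If $p\in\mathcal P(G)$ one also wants a $G$-extension with $p\mid\delta$. Here $p$ splits completely in $K(C)$ for some nontrivial class $C$, and the branch-cycle argument shows --- possibly after passing to another cover realizing $C$ as an inertia class, which is the source of the ``in many cases'' caveat --- that $p$ has a degree-one place lying over one of the branch points $t_j$; specializing at a $t$ congruent to $t_j$ at that place, with everything else controlled as in the previous step, forces $p$ to be tamely ramified in $E_t$, hence $p\mid\delta(E_t/\qq)$, i.e.\ $\delta\equiv 0\bmod p$. The genuine difficulties are not the mod-$p$ solvability, but rather: (i) making the discriminant identity above \emph{exact}, rather than correct up to a bounded factor, which requires a careful description of the local extensions at the primes of $S_E$ and at the wildly ramified primes and uses exactly the flexibility in the definition of $\delta$ emphasised in the footnote; and (ii) executing the second step with the residue class modulo $pM$, the coprimality of $(u,v)$, the sign of $B(u,v)$ and membership in $\mathcal H$ all imposed simultaneously --- it is this coupling of congruence conditions to the squarefreeness of a form value that makes the argument conditional on $abc$.
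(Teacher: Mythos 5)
Your strategy is essentially the paper's own: the conjecture is only established conditionally (Theorem \ref{thm:main} and Corollary \ref{cor1}), under the abc-conjecture and the existence of a $\qq$-regular $G$-realization, and your steps --- the Beckmann-type specialization inertia statement, freezing the behaviour at the bad primes by a congruence modulo $M$, hitting $k\delta_0^{-1}$ mod $p$ via Weil's bound on the Thue curve $B=c$, Granville/Poonen squarefree values of the binary form, and Hilbert's irreducibility theorem at the end --- are exactly Steps 1--4 and the last step of Section \ref{sec:proof}, including the treatment of $k\equiv 0$ via a $K(C)$-rational branch point. The one ingredient you elide is the paper's Lemma \ref{lem:pol}, which removes fixed prime divisors of the branch form by a linear change of the parameter $T$; without this normalization your requirement that $B(u,v)$ be prime to $M$ on a suitable residue class can fail (e.g.\ $2\mid uv(u-v)$ for all coprime pairs even though the form has content $1$ and unit discriminant), so it must be performed before the congruence and squarefree-sieve arguments.
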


\subsection{Outline of the paper}
The main contribution of this paper is the observation that Conjecture \ref{conj:localmalle} holds (conditionally on the abc-conjecture) for groups $G$ which occur as regular Galois groups over $\qq$ (Theorem \ref{thm:main} and Corollary \ref{cor1}), and even unconditionally under some further light assumptions (Lemma \ref{lem:degle3}). In other words, for such groups there are at least no local reasons why the Malle-type conjectures on distribution of discriminants of $G$-extensions (as in Conjecture \ref{conj:malle2}) should fail. In Section \ref{sec:twists}, we give some background on the connection of specializations and twists of Galois covers, thereby re-interpreting our main results as existence results for rational points on twisted covers, directly generalizing the main results in \cite{KP18}.

We also present several variants and strengthenings of our main results, e.g.\ a combination with a previous counting result on specializations by D\`ebes (Theorem \ref{thm:main_strong2}) and some considerations on the set of exceptional primes in Section \ref{sec:exc}. Since the proofs of these variants share the same fundamental ideas, these are presented step-by-step in Section \ref{sec:proof}, and then later adjusted appropriately for the specific variants.

We also refer to Conjecture \ref{conj:localmalle2} in Section \ref{sec:strong} for an analog in terms of mod-$p$ distribution of the usual, non-reduced, discriminant.

\section{Main result}
One of our main goals is to essentially reduce Conjecture \ref{conj:localmalle} to the regular inverse Galois problem for $G$, i.e., to the question whether $G$ occurs as the Galois group of a $\qq$-regular Galois extension of $\qq(T)$. By a $\qq$-regular $G$-extension, we mean a Galois extension $E/\qq(T)$ with Galois group $G$ and with $E\cap \overline{\qq} = \qq$. If such an extension exists, we say that $G$ is a regular Galois group over $\mathbb{Q}$.

\begin{theorem}
\label{thm:main}
Let $E/\qq(T)$ be a $\qq$-regular $G$-extension. 
Then the following hold:
\begin{itemize}
\item[i)] Assume that the abc-conjecture holds. Then there exists $N_0\in \nn$ such that for every prime $p\ge N_0$ and every $k\not\equiv 0$ mod $p$,
there are infinitely many specializations $E_{t_0}/\qq$ of $E/\qq(T)$ which have Galois group $G$ and whose reduced discriminant is congruent to $k$ mod $p$.\\
More precisely, the set of all specializations $E_{t_0}/\qq$ with the above properties and with discriminant of absolute value $\le B$ is asymptotically of size at least $B^\alpha$, for some positive constant $\alpha$ depending only on $E/\qq(T)$ (and not on $p$!).
\item[ii)] If in addition $E/\qq(T)$ has at least one $K(C)$-rational branch point for each conjugacy class $C$ of $G$ \footnote{Note that we do not require this branch point to actually have inertia group generator in class $C$. In particular, the condition is automatically fulfilled if $E/\qq(T)$ has a $\qq$-rational branch point; however, the latter is not possible for certain groups by the so-called branch cycle lemma, hence our more general condition.}  
and if $N_0\le p\in \mathcal{P}(G)$, then the same conclusion holds (unconditionally) for $k=0$.
\end{itemize}
\end{theorem}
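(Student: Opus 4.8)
The plan is to prove this by a specialization argument exploiting the abc-conjecture to force specialization values $t_0$ that are ``almost squarefree'', so that the reduced discriminant of $E_{t_0}/\qq$ is controlled by the ramification of $E/\qq(T)$ above a small set of primes. Concretely, I would start by recalling the standard description of the discriminant of a specialization: for a $\qq$-regular $G$-extension $E/\qq(T)$ with branch points $P_1,\dots,P_r$, and for $t_0\in\qq$ avoiding the branch points and a finite bad set, a prime $p$ is tamely ramified in $E_{t_0}/\qq$ precisely when $t_0$ meets some branch point $P_i$ modulo $p$ (i.e.\ $p$ divides the numerator of a suitable ``discriminant-type'' polynomial $D(t_0)$ obtained from the $P_i$), and the exact power of $p$ in the reduced discriminant is then determined by the completion, hence by the inertia class at $P_i$ and the $p$-adic valuation of $D(t_0)$. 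So the first step is to isolate a polynomial (or finite collection) $D(T)\in\zz[T]$ whose value at $t_0$ governs the ramified primes, together with the bookkeeping of how each prime factor contributes to $\delta(E_{t_0}/\qq)$.

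The core mechanism is then: choose $t_0$ of a special shape so that $D(t_0)$ factors as (a controlled unit/bad part) times $p$ times a large squarefree integer coprime to everything else. For part i), given $p\ge N_0$ and $k\not\equiv 0$, I would parametrize $t_0$ so that $t_0$ hits one chosen branch point $P_{i_0}$ transversally modulo $p$ exactly once, contributing a fixed prime power $p^{e}$ to $\delta$, and so that the remaining value is squarefree. The abc-conjecture (in the form giving that polynomial values $D(t_0)$ are squarefree away from a bounded set, with positive density and the expected count $B^\alpha$ of such $t_0$ up to height $B$ — this is the Granville/abc-squarefree-values machinery) is what makes the ``remaining value squarefree'' step legitimate and simultaneously delivers the lower bound $B^\alpha$ on the count of good $t_0$, with $\alpha$ independent of $p$ because the constraint imposed at $p$ only removes a positive-proportion congruence condition, not a shrinking one. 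Then I would adjust $k$: since $p\ge N_0$ is large, I can move $t_0$ in a residue class modulo $p$ to multiply the squarefree cofactor of $\delta$ by any desired unit modulo $p$, hence realize $\delta\equiv k\pmod p$ for every $k\not\equiv 0$; ensuring the Galois group of the specialization is all of $G$ is handled by Hilbert irreducibility (the good $t_0$ still form a positive proportion, so a density-zero Hilbert exceptional set does not hurt the $B^\alpha$ bound).

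For part ii), the new input needed is a prime $p\in\mathcal P(G)$, $p\ge N_0$, and the goal $\delta\equiv 0\pmod p$, i.e.\ $p$ should ramify in $E_{t_0}/\qq$. Here the hypothesis that $E/\qq(T)$ has a $K(C)$-rational branch point $P_C$ for the relevant class $C$ (the one whose rationality field $K(C)$ is split by $p$) is exactly what lets us choose $t_0\equiv P_C\pmod{\mathfrak p}$ for a prime $\mathfrak p\mid p$ of $K(C)$ lying over $p$, forcing tame ramification at $p$ with inertia in (a conjugate of) $C$; since $p$ splits completely in $K(C)$ this congruence condition is a genuine congruence condition modulo $p$ over $\qq$, compatible with the abc-squarefree parametrization. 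The point of allowing any $K(C)$-rational branch point, not necessarily one with inertia generator in $C$, is that near such a branch point the local behaviour is governed by the branch cycle lemma up to the cyclotomic action, and the residue class of $t_0$ modulo $p$ can still be chosen to realize ramification at $p$; one then reruns the part-i) argument with this extra congruence imposed, still getting infinitely many $t_0$ and the $B^\alpha$ count unconditionally, because forcing ramification at one small prime (rather than forbidding it) does not need the abc-conjecture — it is forbidding square factors at \emph{all} primes that does, and that part is only needed where it was already used, but crucially for $k=0$ we may even drop it since one is allowed to have additional ramified primes as long as $p\mid\delta$; this is why ii) is unconditional.

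The main obstacle I anticipate is the uniformity in $p$: making sure the constant $\alpha$ and the threshold $N_0$ genuinely do not depend on $p$. This forces one to be careful that the congruence conditions imposed at $p$ (transversal intersection with a branch point, plus the fine-tuning residue class to hit $k$) together carve out a fixed positive proportion of residues modulo $p$ — not something like $1/p$ of them — and that the abc-based squarefree-values estimate can be run with the modulus $p$ treated as part of the ``bad set'' uniformly, i.e.\ that Granville's theorem gives $\gg_{D} B^{1/\deg D}$ (or the appropriate $B^\alpha$) squarefree values even after intersecting with a fixed-density congruence class mod $p$, with the implied constant controlled. A secondary but real subtlety is the local computation at the branch point: verifying via the branch cycle lemma and a Krumm--Pollack-style local analysis that hitting $P_i$ with $p$-adic valuation $v$ in $D(t_0)$ produces exactly the predicted exponent of $p$ in $\delta(E_{t_0}/\qq)$, and that for $v=1$ this exponent is a fixed nonzero constant depending only on the class, so that the ``$\delta\equiv k$'' tuning reduces cleanly to adjusting the squarefree cofactor modulo $p$.
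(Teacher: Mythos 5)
There is a genuine gap at the heart of part i). Your argument hinges on the assertion that you ``can move $t_0$ in a residue class modulo $p$ to multiply the squarefree cofactor of $\delta$ by any desired unit modulo $p$''. That surjectivity is precisely the nontrivial content of the theorem and cannot simply be asserted: writing $F(X,Y)$ for the product of the homogenized minimal polynomials of the branch points, you need that for \emph{every} $k\in\mathbb{F}_p^\times$ there exist $(x_0,y_0)$ with $F(x_0,y_0)\equiv k\cdot N'^{-1} \pmod p$ (where $N'$ is the fixed contribution of the bad primes to $\delta$), uniformly for all $p\ge N_0$. The paper obtains this by observing that $F(X,Y)-k=0$ is a Thue curve, hence absolutely irreducible (as $F$ is squarefree and not a proper power) of genus bounded independently of $p$ and $k$, and then invoking Hasse--Weil to produce $\mathbb{F}_p$-points for all sufficiently large $p$; this is exactly where the uniform threshold $N_0$ comes from, and nothing in your sketch replaces it. A second, outright error: for part i) you propose to make $t_0$ meet a branch point transversally mod $p$ so that $p^e$ divides $\delta$ --- but then $\delta\equiv 0\pmod p$, which is the \emph{opposite} of what $k\not\equiv 0$ requires. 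For $k\not\equiv 0$ one must force $p$ to be \emph{unramified}, i.e.\ $F(x_0,y_0)\not\equiv 0\pmod p$; meeting a branch point mod $p$ is the mechanism only for part ii).

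Two further points you elide, which the paper treats as separate steps. First, Granville/Poonen-type squarefree-value results require that the polynomial being specialized have no fixed square divisor; this is not automatic for $F$ and is arranged by a preliminary linear change of the parameter (Lemma \ref{lem:pol}). Since the congruence conditions mod $N^m$ and mod $p$ further restrict the argument of $F$, one must re-check at each stage that no fixed square divisor is introduced. Second, the ``controlled bad part'' of $\delta$ has to be literally frozen: the paper picks a base point $t_1$ with $F$-value coprime to all exceptional primes and restricts to $t_0$ $N$-adically close to $t_1$ (Krasner), so that the exceptional primes contribute a \emph{constant} $N'$ to $\delta$, which can then be inverted mod $p$ when choosing the target residue for $F$. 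Finally, your uniformity heuristic is slightly off: the condition imposed at $p$ confines $(x_0,y_0)$ to a residue class mod $p$ (density about $1/p^2$, not a fixed positive proportion); the independence of the exponent $\alpha$ from $p$ comes from the structure of the counting argument for specializations in an arithmetic progression (as in D\`ebes), with only the implied constant depending on $p$. Your treatment of part ii) is essentially in line with the paper's.
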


The proof of Theorem \ref{thm:main} is contained in Section \ref{sec:proof}.

\begin{corollary}
\label{cor1}
Assume $G$ occurs as the Galois group of a $\qq$-regular Galois extension $E/\qq(T)$, and assume that $E/\qq(T)$ has a $K(C)$-rational branch point for each conjugacy class $C$ of $G$. 
Then Conjecture \ref{conj:localmalle} holds for $G$, conditional on the abc-conjecture. Furthermore, it suffices to consider $G$-extensions of $\qq$ which arise as specializations of $E/\qq(T)$.
\end{corollary}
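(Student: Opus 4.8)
The plan is to deduce Corollary \ref{cor1} directly from Theorem \ref{thm:main}, the corollary amounting to little more than matching the quantifiers of the theorem to the wording of Conjecture \ref{conj:localmalle}. First I would fix a $\qq$-regular $G$-extension $E/\qq(T)$ as in the hypothesis (so that it has a $K(C)$-rational branch point for every conjugacy class $C$ of $G$) and let $N_0$ be the constant attached to it by Theorem \ref{thm:main}. Then I would take the exceptional set $S_0$ of Conjecture \ref{conj:localmalle} to be the union of the finitely many primes $p < N_0$ with the finitely many prime divisors of $|G|$; this is a finite set depending only on $G$ (and on the chosen $E$).

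Now fix a prime $p \notin S_0$. If $k \not\equiv 0 \bmod p$, part i) of Theorem \ref{thm:main} --- the step that invokes the abc-conjecture --- produces a specialization $E_{t_0}/\qq$ with Galois group $G$ and reduced discriminant congruent to $k$ mod $p$; in fact it produces infinitely many, with a positive-exponent lower bound on the count, which is more than Conjecture \ref{conj:localmalle} asks for. If $k \equiv 0 \bmod p$, I would split into two cases. When $p \in \mathcal{P}(G)$, part ii) of Theorem \ref{thm:main} applies unconditionally, using exactly the standing hypothesis of the corollary about $K(C)$-rational branch points, and again yields the desired specialization. When $p \notin \mathcal{P}(G)$, we have $p \nmid |G|$ since $p \notin S_0$, so $0 \bmod p$ is by definition a trivial local obstruction and nothing needs to be shown. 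As every residue class mod $p$ has now been accounted for, Conjecture \ref{conj:localmalle} holds for $G$ conditionally on abc; and since every extension produced above is a specialization of $E/\qq(T)$, the final assertion of the corollary follows at once.

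I do not anticipate any genuine difficulty in this argument: the whole substance lies in Theorem \ref{thm:main}, proved in Section \ref{sec:proof}, while the corollary itself is a bookkeeping step. The only points requiring care are purely formal --- enlarging $S_0$ by the prime divisors of $|G|$ so that the ``trivial obstruction'' case genuinely covers $k \equiv 0$, $p \notin \mathcal{P}(G)$, and observing that the corollary's branch-point hypothesis is precisely the input needed to apply part ii) of the theorem to the zero class.
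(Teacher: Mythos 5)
Your proposal is correct and follows exactly the intended derivation: the paper gives no separate proof of Corollary \ref{cor1}, treating it as an immediate consequence of Theorem \ref{thm:main}, with part i) covering the nonzero residue classes, part ii) (via the $K(C)$-rational branch point hypothesis) covering $k\equiv 0$ for $p\in\mathcal{P}(G)$, and the zero class for $p\notin\mathcal{P}(G)$ being outside the scope of Conjecture \ref{conj:localmalle}. Your bookkeeping with the exceptional set $S_0$ is exactly what is needed.
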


\begin{remark}
\begin{itemize}
\item[a)]
Of course one point of Theorem \ref{thm:main} is to give a distribution result in the direction of (but of course much weaker than) the Malle conjecture. In this regard, it cannot yield new insights for groups for which these heuristics are known in full. E.g., for abelian groups Malle's conjecture is known to hold due to Wright (\cite{Wright}), and indeed here Conjecture \ref{conj:localmalle} is trivial: namely, one easily reduces to finding cyclic extensions of order $n$ with reduced discriminant contained in a prescribed coprime residue class modulo $p$. Now as soon as $p$ is coprime to $n$, this is achieved via Dirichlet's prime number theorem, since for any $q\equiv 1$ mod $n$, the cyclotomic extension $\qq(\zeta_q)/\qq$ contains a $C_n$-subextension ramified only at $q$.\\
However, Theorem \ref{thm:main} should also be viewed under the general question: how much variety is contained in the set of all specializations of a given regular Galois extension?
Already Hilbert's irreducibility theorem gave a partial answer to this question, ensuring the existence of infinitely many specializations with the same Galois group. Recent work by the author and others investigates the question whether the specialization set of a single regular Galois extension can answer (the lower bound of) Malle's conjecture in full (\cite{KL19}), or even contain all extensions of a prescribed Galois group (\cite{KL18}, \cite{KLN18}), giving negative answers under weak assumptions in both cases. The above theorem complements these negative {\textit{global}} results with a positive result in the direction of the {\textit{local}} part of Malle's conjecture.
\item[b)] The special case $G=C_2$ in Theorem \ref{thm:main}i) yields Krumm's and Pollack's result about rational points on quadratic twists on hyperelliptic curves (\cite[Theorem 2]{KP18}), as explained in Section \ref{sec:twists}. Compare also the earlier \cite{Krumm} for some even more special cases.
\end{itemize}
\end{remark}
 
 \section{Preliminaries on twists and specializations}
 \label{sec:twists}
We explain briefly in what sense our results generalize previous results about quadratic twists of hyperelliptic curves.
 Compare \cite[Section 2]{Debes17} for the following.\\
Let $f:X\to \mathbb{P}^1$ be a Galois cover defined over $\qq$ with Galois group $G$ (for short: a $\qq$-$G$-cover) and with branch points $t_1,\dots,t_r\in \mathbb{P}^1(\overline{\qq})$. 
Given any $t_0 \in \mathbb{P}^1(\qq) \setminus \{t_1, \dots, t_r\}$, there is a homomorphism $f_{t_0}: G_\qq \to G$ (well-defined up to inner conjugation in $G$), called the {\textit{specialization representation}} of $f$ at $t_0$. In the same way, if $E/\qq(T)$ denotes the function field extension of the cover $f$,
there is a {\textit{specialization}} of $E/\qq(T)$ at $t_0$, defined as the residue field extension of $E/\mathbb{Q}(T)$ at a prime ideal $\mathcal{P}$ lying over $\langle T-t_0 \rangle$, and denoted by $E_{t_0}/\qq$. As the extension $E/\qq(T)$ is Galois, the field $E_{t_0}$ does not depend on the choice of $\mathcal{P}$ and the extension $E_{t_0}/\qq$ is finite and Galois. Moreover,  its Galois group is a subgroup of ${\textrm{Gal}}(E/\qq(T))$, namely the decomposition group of $E/\qq(T)$ at $\mathcal{P}$.
The two notions of specializations are compatible in the sense that the fixed field of the kernel of $f_{t_0}$ equals $E_{t_0}$. 

Given a $\qq$-$G$-cover $f:X\to\mathbb{P}^1$ and a homomorphism $\varphi:G_\qq\to G$, there exists a (not necessarily Galois) cover $\tilde{f}^\varphi:\tilde{X}\to \mathbb{P}^1$ over $\qq$, called the {\textit{twisted cover}} (or just, the twist) of $f$ by $\varphi$ with the following properties:
\begin{itemize}
\item[a)] $\tilde{f}^\varphi$ becomes isomorphic to $f$ after base change to the fixed field of the kernel of $\varphi$.
\item[b)] For any non-branch point $t_0\in \mathbb{P}^1(\qq)$, the specialization representation $f_{t_0}$ equals $\varphi$ if and only if $\tilde{f}^\varphi$ has a rational point extending $t_0$.
\end{itemize}

The above translations allow to directly rephrase our main results in terms of distribution of twists with rational points of a given Galois cover; e.g., Theorem \ref{thm:main}i) becomes:

{\textbf{Theorem \ref{thm:main}'}}:\\
Let $f$ be a $\qq$-$G$-cover. Assuming the abc-conjecture, there exists $N_0\in \nn$ such that for every prime $p\ge N_0$ and every $k\not\equiv 0$ mod $p$,
there are infinitely many epimorphisms $\varphi: G_\qq \to G$ such that
\begin{itemize}
\item[a)] (the fixed field of the kernel of) $\varphi$ has reduced discriminant congruent to $k$ modulo $p$, 
\item[b)] the twisted cover $\tilde{f}^\varphi$ has a non-trivial rational point (i.e., a rational point not extending a branch point).
\end{itemize}

In particular, in the case $G=C_2$, $\qq$-$G$-covers $f:X\to \mathbb{P}^1$ correspond one-to-one to hyperelliptic curve equations $\mathcal{C}: X^2 = F(T)$, with $F\in \zz[T]$ squarefree, and homomorphisms $\varphi:G_\qq\to G$ correspond one-to-one to squarefree integers $d\in \zz$. The twisted cover of $f$ by $\varphi$ is then nothing but the quadratic twist $\mathcal{C}_d: X^2 = dF(T)$. Asking whether the twist $\mathcal{C}_d$ has a non-trivial (i.e., unramified) rational point is therefore the same as asking whether the cover $f$ specializes to the quadratic extension $\qq(\sqrt{d})/\qq$. This extension, in term, has reduced discriminant ``essentially" equal to $d$ (possibly up to sign and/or a factor $2$; see however footnote 1, asserting that instead defining the reduced discriminant of $\qq(\sqrt{d})$ to simply be $d$ would not affect the proof in Section \ref{sec:proof}). The special case $G=C_2$ of Theorem \ref{thm:main}' therefore becomes:

\begin{theorem}[\cite{KP18}, Theorem 2]
\label{thm:kp}
Let $f\in \mathbb{Z}[X]$ be non-constant and separable. Conditional on the abc-conjecture, for every sufficiently large prime $p$ and every $k\not\equiv 0$ mod $p$, there exist infinitely many squarefree $d\in \mathbb{Z}$ such that $d\equiv k$ mod $p$ and the hyperelliptic curve $Y^2=df(X)$ has a non-trivial rational point.\footnote{In fact, \cite[Theorem 2]{KP18} reaches this conclusion about {\it integral} points, a strengthening of interest in general, although not as relevant in our Galois-theoretical setup.}
\end{theorem}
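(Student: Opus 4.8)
The plan is to obtain Theorem~\ref{thm:kp} as the special case $G=C_2$ of Theorem~\ref{thm:main}i) (in the reformulation in terms of twisted covers given in Section~\ref{sec:twists}), so that all the analytic work --- in particular the use of the abc-conjecture --- is outsourced to the proof in Section~\ref{sec:proof}. Concretely, given a non-constant separable $f\in\zz[X]$, I would attach to it the $\qq$-$C_2$-cover $\mathcal{C}\colon Y^2=f(X)\to\mathbb{P}^1$, whose branch locus is contained in the set of roots of $f$ together with $\{\infty\}$, and whose function field extension is $E/\qq(T)=\qq(T)(\sqrt{f(T)})/\qq(T)$. Since $f$ is non-constant and separable, $f$ is not a constant multiple of a square in $\qq(T)$, so $\sqrt{f(T)}\notin\overline{\qq}$ and $E/\qq(T)$ is a $\qq$-regular $G$-extension for $G=C_2$; thus Theorem~\ref{thm:main}i) applies verbatim to it.

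First I would feed this $E/\qq(T)$ into Theorem~\ref{thm:main}i): it produces $N_0\in\nn$ and a constant $\alpha>0$ such that for every prime $p\ge N_0$ and every $k\not\equiv 0$ mod $p$ there are infinitely many specializations $E_{t_0}/\qq$ of Galois group $C_2$ whose reduced discriminant is $\equiv k$ mod $p$; indeed the set of such quadratic fields of absolute discriminant $\le B$ has size $\gg B^\alpha$, and hence is infinite as $B\to\infty$.

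Next I would run the twist--specialization dictionary of Section~\ref{sec:twists} in reverse. Each such specialization has the form $E_{t_0}=\qq(\sqrt{d})$ with $d$ the squarefree part of $f(t_0)$, and its specialization homomorphism $f_{t_0}\colon G_\qq\to C_2$ is precisely the homomorphism $\varphi_d$ cutting out $\qq(\sqrt{d})$. By property b) of the twisted cover, $\tilde{f}^{\varphi_d}$ --- which for $G=C_2$ is exactly the quadratic twist $\mathcal{C}_d\colon Y^2=d\,f(X)$ --- then has a rational point not lying over a branch point, namely the point above $t_0$. Distinct quadratic fields $\qq(\sqrt{d})$ correspond to distinct squarefree integers $d$, so the $\gg B^\alpha$ bound yields infinitely many squarefree $d$ lying in the prescribed residue class modulo $p$ for which $\mathcal{C}_d$ acquires a non-trivial rational point, which is the assertion.

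I expect the only point requiring care --- and it is purely a bookkeeping one --- to be matching the two notions of discriminant: the reduced discriminant of $\qq(\sqrt{d})$ agrees with $d$ only up to sign and possibly a factor $2$, so the congruence controlled by Theorem~\ref{thm:main}i) pins down $d$ modulo $p$ only up to multiplication by one of $\pm 1,\pm 2,\pm 2^{-1}$. Since we may take $N_0>2$ and $k$ ranges over all non-zero residues, this still realizes every target class of $d$ after relabelling $k$; moreover the footnote to the definition of the reduced discriminant (and the corresponding remark in Section~\ref{sec:twists}) permits defining the reduced discriminant of $\qq(\sqrt{d})$ to be literally $d$, making the matching exact. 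Apart from this, there is no further obstacle: the substance lies entirely in Theorem~\ref{thm:main}i), and Theorem~\ref{thm:kp} is simply a translation of its $C_2$ instance through the language of hyperelliptic curves and their quadratic twists.
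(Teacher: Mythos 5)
Your proposal is correct and follows essentially the same route as the paper: Section~\ref{sec:twists} derives Theorem~\ref{thm:kp} precisely by specializing the twist--specialization dictionary to $G=C_2$, identifying the twisted cover with the quadratic twist $Y^2=d\,f(X)$ and the specialization $E_{t_0}=\qq(\sqrt{d})$ with the squarefree part of $f(t_0)$, and then invoking Theorem~\ref{thm:main}i). Your handling of the discrepancy between $d$ and the reduced discriminant of $\qq(\sqrt{d})$ (up to sign and a factor of $2$) matches the paper's own remark and footnote on this point.
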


 \section{Proof of Theorem \ref{thm:main}}
 \label{sec:proof}
 We note that the novelty of Theorem \ref{thm:main} lies in assertion i). Assertion ii) is essentially known and included in the statement only to complete the picture,
 
 Several main ideas of the proof below are present in some form in the proof of the special case $G=C_2$ in \cite{KP18}. The generalization requires some care though, e.g.\ regarding the existence of ``bad primes" for regular Galois extensions. The eventual goal is to reduce the problem to squarefree values of a certain two-variable polynomial. Existence of such specializations is known conditional on the abc-conjecture (first noted in the one-variable case by Granville in \cite{Granv}), of course under the trivially necessary condition that the polynomial has no fixed divisor $p^2$ (i.e., no prime value whose square divides all specialization values). 
 Here we call an integer $d>1$ a {\textit{fixed divisor}} of a polynomial $f(X_1,\dots, X_r)$ if $d$ divides all values $f(x_1,\dots,x_r)$ with $x_1,\dots, x_r\in \zz$.
 Note that the set of fixed divisors of $f$ is always finite, which is obvious from just considering a single non-zero specialization.

To prove Theorem \ref{thm:main}i), let $t_1,...,t_s \in \mathbb{P}^1(\overline{\qq})$ be a set of $G_\qq$-orbit representatives of the branch points of $E/\qq(T)$, and let $f_i \in \zz[X,Y]$ be the homogenized irreducible polynomial of $t_i$ over $\zz$ (in particular, $f_i:=Y$ for $t_i = \infty$). Let $F(X,Y) = \prod_{i=1}^s f_i$, and note that $F$ has no constant integer factor, i.e., is of content $1$. Then the following is well-known (compare e.g. \cite[Prop.\ 4.2]{Beckmann}, or \cite[\S 2.2.3]{Legrand} for stronger statements):
 
{\textbf{Specialization inertia theorem}}:\\
There exists a finite set $S_0$ of primes such that for all $p\notin S_0$ and for all $t_0 = (x_0:y_0)\in \mathbb{P}^1(\qq)$ which are not branch points of $E/\mathbb{Q}(T)$ (with $(x_0,y_0)=1$), the following holds:
 If $F(x_0,y_0)$ is strictly divisible by $p$, then $p$ is ramified in $E_{t_0}/\qq$; and if $F(x_0,y_0)$ is not divisible by $p$, then $E_{t_0}/\qq$ is unramified at $p$.

 In order to control ramification in $E_{t_0}/\qq$, we aim for values $(x_0:y_0)$ which make $F(x_0,y_0)$ squarefree. 
 
 {\textit{First step: Eliminating fixed prime divisors}}\\
 At first, we ensure that $F$ itself has no fixed prime divisors (i.e., in particular no primes whose square divides all values $F(x_0,y_0)$). This is of course not necessarily the case for the given $F$, 
 but can be guaranteed via a linear transformation in the variable $T$ (and therefore may be assumed without loss), as follows:
  \begin{lemma}
 \label{lem:pol}
 Let $F\in \zz[X,Y]$ be a homogeneous polynomial of degree $n\in \nn$, of content $1$ and without any repeated factors. Then there exists $N_0\in \nn$ such that for all integers $N$ divisible by $N_0$, the polynomial $\frac{F(X, NY)}{{\textrm{content}}(F(X,NY))} \in \zz[X,Y]$ has no fixed prime divisor. 
  \end{lemma}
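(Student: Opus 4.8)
The plan is to reduce the statement to two elementary observations. The first is that a homogeneous polynomial $H\in\zz[X,Y]$ of degree $n$ with $\mathrm{content}(H)=1$ can have no fixed prime divisor $p>n$: indeed, such a $p$ would force $H(X,1)\bmod p$ --- a polynomial of degree $\le n<p$ --- to vanish at all $p$ residues, hence to be the zero polynomial in $\ff_p[X]$; but its coefficients are precisely the coefficients of $H$, so $p\mid\mathrm{content}(H)=1$, a contradiction. Since $F(X,NY)/\mathrm{content}(F(X,NY))$ is homogeneous of degree $n$ with content $1$ for every $N$, this already rules out all primes $p>n$, uniformly in $N$, leaving only the finitely many primes $p\le n$.

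For those small primes we simply choose $N_0$ to make $N$ highly divisible by $p$, which forces the reduction mod $p$ of the normalized polynomial to be a single monomial. Write $F=\sum_{i=0}^n a_iX^iY^{n-i}$ and let $i^\ast:=\max\{\,i:a_i\ne0\,\}$, so $a_{i^\ast}\ne0$. Put $N_0:=\prod_{p\le n}p^{\,v_p(a_{i^\ast})+1}$. Fix $N$ divisible by $N_0$ and a prime $p\le n$, and write $m:=v_p(N)\ge v_p(a_{i^\ast})+1$. The coefficients of $F(X,NY)$ are the $a_iN^{n-i}$, of $p$-valuation $v_p(a_i)+(n-i)m$; since $a_i=0$ for $i>i^\ast$ and, for $i<i^\ast$, $(n-i)m\ge(n-i^\ast)m+m>(n-i^\ast)m+v_p(a_{i^\ast})$, this valuation is minimized uniquely at $i=i^\ast$. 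Hence $v_p(\mathrm{content}(F(X,NY)))=v_p(a_{i^\ast})+(n-i^\ast)m$, and in $G:=F(X,NY)/\mathrm{content}(F(X,NY))$ every coefficient of $X^iY^{n-i}$ has positive $p$-valuation except the one at $i=i^\ast$, which is a $p$-adic unit. Thus $G\equiv u\,X^{i^\ast}Y^{\,n-i^\ast}\pmod p$ with $u\not\equiv0$, so $G(1,1)\equiv u\not\equiv0\pmod p$ and $p$ is not a fixed divisor of $G$. Combined with the first paragraph, $G$ has no fixed prime divisor at all, as claimed.

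The only point needing care is the valuation bookkeeping in the second step: one must check that the substitution $Y\mapsto NY$ lets the content absorb exactly the power of $p$ carried by the coefficient $a_{i^\ast}$ of the highest power of $X$, so that the mod-$p$ reduction collapses to a single monomial instead of retaining several terms. The inequality $v_p(N)>v_p(a_{i^\ast})$ built into $N_0$ is precisely what makes the minimal valuation among the coefficients occur only at $i=i^\ast$. Everything else is routine; in particular the hypothesis that $F$ has no repeated factors is not needed here (it is carried along only because it is available in the application of the lemma in Section~\ref{sec:proof}).
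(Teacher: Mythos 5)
Your proof is correct, and it takes a genuinely different (and in some respects sharper) route than the paper's. The paper first substitutes $Y\mapsto \alpha Y$ (with $\alpha$ the leading $X$-coefficient) and divides by the content, using the absence of repeated factors to guarantee that the leading $X$-term becomes exactly $X^n$ or $X^{n-1}Y$; it then removes the finitely many fixed prime divisors one at a time, observing that $Y\mapsto pY$ followed by division by the content turns the reduction mod $p$ into a nonzero monomial (so $p$ is no longer a fixed divisor) while introducing no new fixed divisors, since $y\mapsto py$ is a bijection on $\zz/m\zz$ for $(m,p)=1$. You instead produce an explicit $N_0$ in one step and split the primes: for $p>n$ you rule out fixed divisibility of \emph{any} content-$1$ binary form of degree $n$ by counting roots of $H(X,1)$ over $\ff_p$, and for $p\le n$ your valuation bookkeeping collapses the normalized form to a single unit monomial mod $p$ --- the same monomial-collapse mechanism as in the paper, but applied once with $v_p(N)>v_p(a_{i^\ast})$ rather than iterated. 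Your version buys an explicit $N_0=\prod_{p\le n}p^{v_p(a_{i^\ast})+1}$ depending only on $n$ and the leading coefficient, avoids any appeal to the (easy but separate) finiteness of the set of fixed divisors, and correctly notes that the ``no repeated factors'' hypothesis is not needed. The paper's version buys brevity and isolates the ``$Y\mapsto cY$ introduces no new fixed divisors'' bijection argument, a pattern echoed in Step 2 of the main proof. The only cosmetic caveat, common to both arguments, is that $N$ should be taken nonzero so that ${\textrm{content}}(F(X,NY))$ is defined.
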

 \begin{proof}
 Let $\alpha$ be the coefficient of the leading $X$-term of $F$. Since $\tilde{F}(X,Y):=\frac{F(X, \alpha Y)}{{\textrm{content}}(F(X,\alpha Y))}$ has leading $X$-term either equal to $X^n$ or $X^{n-1}Y$, we may as well assume without loss that $F$ itself has such leading $X$-term. 
 It then suffices to iteratively use the following fact: If $p$ is any prime number, then $\frac{F(X, pY)}{{\textrm{content}}(F(X,pY))}$ does not have $p$ as a fixed prime divisor, and has no new fixed prime divisor compared to $F(X,Y)$. Indeed, since $y\mapsto py$ is a bijection on $\zz/m\zz$ for any $m$ coprime to $p$, this transformation cannot introduce new fixed divisors; and since $\frac{F(X, pY)}{{\textrm{content}}(F(X,pY))}$ is congruent to $X^n$ or $X^{n-1}Y$ modulo $p$, it clearly does not have $p$ as a fixed divisor either.
 \end{proof}

 Use Lemma \ref{lem:pol} to replace $T$ by $S:=N_0T$; with respect to this new parameter, the ramification polynomial $F(X,Y)$ then gets replaced by $\tilde{F}(X,Y) = \frac{F(X, N_0Y)}{{\textrm{content}}(F(X,N_0Y))}$, which has no fixed prime divisor. Up to such transformation, we may therefore assume that $F(X,Y)$ itself has no fixed prime divisor.
 
With view to applications, we note here that if all branch points are finite and algebraic integers, then $F(1,0)=1$, whence the manipulations of Lemma \ref{lem:pol} are not necessary at all.
 
{\textit{Second step: Controlling the ``bad" primes}}\\
Next, in order to get full control over the discriminant of the specialization, we need to control the behaviour inside the ``exceptional set" $S_0$ for the Specialization Inertia Theorem. To that end, let $N$ be the product of all primes in $S_0$. 
Pick a finite non-branch point $t_1 = \frac{a_1}{b_1}$ with $(a_1,b_1) = 1$, such that no prime in $S_0$ divides $F(a_1,b_1)$ (possible by Step 1 and the Chinese Remainder Theorem). 
 By Krasner's lemma, for all values $t_0$ of the form $t_0 = \frac{a_1+N^m x_0}{b_1+N^m y_0}$ with $m$ sufficiently large and with $x_0,y_0\in \zz$ (i.e., $t_0$ sufficiently $N$-adically close to $t_1$), the specializations $E_{t_0}/\qq$ and $E_{t_1}/\qq$ then have the same $p$-adic behaviour at all primes $p\in S_0$. Let $\widehat{F}(X,Y) = F(a_1+N^m X, b_1+N^m Y)$. 
 Specializing  $\widehat{F}$ at integer values corresponds to specializing $E$ at values $t_0$ as above. Note that $\widehat{F}(X,Y)$ still does not have any fixed prime divisor, since, compared to $F(X,Y)$, its set of mod-$p$ values has not been restricted for any $p\notin S_0$, whereas no $p\in S_0$ is a fixed prime divisor by definition of $t_1$.
 
 {\textit{Third step: Polynomial values in prescribed residue classes}}\\
 Now we fix a prime $p\notin S_0$ and control the behaviour of $\widehat{F}(x_0,y_0)$ mod $p$. 
 Let $N'$ be the product of all primes in $S_0$ which ramify under (one, and then by Krasner automatically all) specializations $T\mapsto t_0 =\frac{a_1+N^m x_0}{b_1+N^m y_0}$  as above.
Given any non-zero residue class $k$ mod $p$, we want to enforce $\widehat{F}(x_0,y_0) \equiv k\cdot N'^{-1}$ mod $p$. This is easily achieved, after possibly enlarging the exceptional set $S_0$, by using the Hasse-Weil bound. Indeed, multiplication with $N'$ is a bijection on $\mathbb{F}_p^\times$, and the maps $x\mapsto a_1+N^m x$ and $y\mapsto b_1+N^m y$ are bijections on $\mathbb{F}_p$. 
It therefore suffices to show that, for any $k\in \mathbb{F}_p^\times$, the curve given by $F(X,Y)-k=0$ has non-trivial $\mathbb{F}_p$-points. Since $F$ is a squarefree homogeneous polynomial,\footnote{To ensure that $F$ remains squarefree modulo $p$, it suffices that no two branch points of $E/\qq(T)$ coincide modulo $p$. This could be obtained by increasing the set $S_0$; but in fact, the primes modulo which two branch points coincide are already contained in the set of exceptional primes for the Specialization Inertia Theorem.} this curve is easily seen to be absolutely irreducible \footnote{Indeed, this sort of equation is a {\textit{Thue equation}}, which is absolutely irreducible unless $F$ is a proper power, see, e.g., \cite[Thm.\ VI.9.1]{Lang}.} 
and of bounded genus (with the bound not depending on $p$ or $k$). Up to possibly enlarging the exceptional set $S_0$, we may therefore assume that this curve has non-trivial $\mathbb{F}_p$-points, 
yielding the existence of $x_0,y_0\in \zz$ with $(x_0,y_0)=1$ such that $\widehat{F}(x_0,y_0) \equiv k\cdot N'^{-1}$ mod $p$.  Recall also that our definition of $\widehat{F}$ implies that $\widehat{F}(x_0,y_0)$ is coprime to $N$.

{\textit{Fourth step: Squarefree specializations}}\\
Now we may use known criteria for squarefree specializations of $F$. Note that due to the above restrictions modulo $N^m$ and modulo $p$, we are in fact looking for integer specializations of the polynomial $F(a_1+N^m (pX+a), b_1+N^m (pY+b)) = \widehat{F}(pX+a,pY+b)$, with suitable integers $a,b$ arising from the modulo-$p$ requirements. In particular, all integer specializations of this polynomial are coprime to $N$, since $F(a_1,b_1)$ is. Also, this polynomial of course still has no fixed prime divisor, since $\widehat{F}(X,Y)$ did not and the modulo-$p$ restriction of the specialization values could only possibly introduce the fixed prime divisor $p$ (which of course is not the case, since $a$ and $b$ were such that $\widehat{F}(px+a,py+b) \equiv k\cdot N'^{-1}\ne 0$ mod $p$ for all $x,y\in \zz$).

In particular, from \cite[Theorem 3.2]{Poonen} \footnote{That theorem has as a technical assumption that $Y$ should occur in every irreducible factor of $F$, translating to the requirement that $T\mapsto 0$ should be unramified, which is however without loss up to an affine shift.}
 we obtain that, conditional on the abc-conjecture, there are infinitely many $(x_0,y_0)\in \zz^2$  such that $F(a_1+N^m (px_0+a), b_1+N^m (py_0+b))$ is squarefree. In fact, the set of such $(x_0,y_0)$ has positive density in $\zz^2$, assuming the right counting method (for our purposes, it even suffices that, upon fixing one admissible value $y_0$, the set of all $x_0$ fulfilling the above has positive density in $\zz$, which is the content of \cite[Theorem 1]{Granv}).
 
 Pick any such $x_0,y_0$ and set $t_0 = \frac{a_1+N^m (px_0+a)}{b_1+N^m (py_0+b)}$ (with automatically coprime numerator and denominator). Then the reduced discriminant $\delta$ of $E_{t_0}/\qq$ factors as $\delta = N' \cdot |F(a_1+N^m (px_0+a), b_1+N^m (py_0+b))|$. Indeed, since the value $F(a_1+N^m (px_0+a), b_1+N^m (py_0+b))$ is squarefree and not divisible by any prime in $S_0$, the Specialization Inertia Theorem shows that the ramified primes outside of $S_0$ in $E_{t_0}/\qq$ are exactly the prime divisors of $F(a_1+N^m (px_0+a), b_1+N^m (py-0+b))$. But $a$ and $b$ were chosen such that $F(a_1+N^m (px_0+a), b_1+N^m (py_0+b)) \equiv N'^{-1}\cdot k$ mod $p$. Since the density results show that the set of values $t_0$ obtained in this way is not bounded from above, we may also choose them such that $F(a_1+N^m (px_0+a), b_1+N^m (py_0+b))>0$,\footnote{Note here that $F$ can be assumed to have positive leading $X$-coefficient without loss.} showing in total $\delta \equiv k$ mod $p$.
 
 {\textit{Last step: Counting $G$-extensions}}\\
 It remains to ensure that among the specializations $E_{t_0}/\qq$ obtained as above, there are sufficiently many whose Galois group is the full group $G$. To see this, note that the admissible values $t_0$ form a set which, up to fractional linear transformation, has a subset of positive density inside some arithmetic progression (even after e.g. fixing $y_0$), i.e., positive density in the set of all integers. On the other hand, the set of integer specializations of any $\qq$-regular extension which do not yield the full Galois group is of density $0$ inside $\zz$, by Hilbert's irreducibility theorem. This shows that ``most" of the specializations $E_{t_0}/\qq$ above have group $G$. The fact that such a set (i.e., positive density inside $\zz$, up to fractional linear transformation) of specialization values $t_0$ yields asymptotically at least $B^\alpha$ distinct extensions $E_{t_0}/\qq$ with discriminant of absolute value $\le B$ with some constant $\alpha>0$ has been used several times, e.g.\ in \cite[Theorem 1.1]{Debes17} or \cite[Theorem 4.2]{Koe18}. The constant $\alpha$ here depends only on the branch point number and ramification indices of $E/\qq(T)$, and in particular not on $p$.

 Lastly, to find discriminants $\delta \equiv 0$ mod $p$ among the specializations of $E/\qq(T)$ as in ii), it suffices to use the above Specialization Inertia Theorem and the definition of $K(C)$ (see also, e.g., Remark 3.3(2) in \cite{KLN18}). The fact that the number of $G$-extensions among those is at least of the given order of growth follows just as in the last step of part i), completing the proof.

 \section{Strengthenings of Theorem \ref{thm:main}}
 \label{sec:strong}
 We note a few strengthenings of Theorem \ref{thm:main}i). Firstly, rather than considering only mod-$p$ residue classes, one may as well consider coprime residue classes modulo $p^m$ for any power $p^m$. This simply requires an additional Hensel lifting on the curve $F(X,Y)-k = 0$ in Step 3 (viewed over the field $\qq_p$). Secondly, all considerations in the above proof remain true if instead of a single prime $p\notin S_0$, one considers finitely many such primes $p_1,\dots, p_r$. This is simply due to the fact that the modulo conditions arising in the above proof are compatible by Chinese remainder theorem (note to this end that the modulo-$N$ conditions are always the same, independently of $p_i$). One therefore immediately has the following stronger version of Theorem \ref{thm:main}i).
 \begin{theorem}
 \label{thm:main_strong1}
 Assume the abc-conjecture.\\
 Let $E/\qq(T)$ be a $\qq$-regular $G$-extension, and let $A$ be an integer, not divisible by finitely many primes depending only on $E/\qq(T)$.
 Let $\mathcal{S}(E,A)$ denote the set of all $G$-extensions of $\qq$ which are specializations of $E/\qq(T)$ and which are unramified at all prime divisors of $A$.
 Let $\overline{\delta}:\mathcal{S}(E,A) \to (\zz/A\zz)^\times$ denote the mod-$A$ reduction of the reduced discriminant map. Then $\overline{\delta}$ is surjective, conditional on the abc-conjecture.
 \end{theorem}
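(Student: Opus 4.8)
The plan is to run the proof of Theorem \ref{thm:main}i) essentially verbatim, with two modifications: a Hensel lifting inserted into its third step, and a use of the Chinese Remainder Theorem to treat prime powers and several primes at once. Write $A=p_1^{m_1}\cdots p_r^{m_r}$ in its prime factorization, and let $S_0$ be the exceptional set of the proof of Theorem \ref{thm:main}i), enlarged once and for all (depending only on $E/\qq(T)$) as that proof requires; by hypothesis none of the $p_i$ lies in $S_0$, so $\gcd(A,N)=1$ where $N:=\prod_{p\in S_0}p$. Fix a target class $\overline{k}\in(\zz/A\zz)^\times$; by CRT it corresponds to classes $k_i\in(\zz/p_i^{m_i}\zz)^\times$, and it is these that we will realize compatibly.

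Steps 1 and 2 of the proof of Theorem \ref{thm:main}i) -- the linear change of variable removing fixed prime divisors, and the passage to a non-branch point $t_1$ chosen $N$-adically so as to freeze the behaviour at all primes of $S_0$ -- are unchanged; they produce the polynomial $\widehat{F}(X,Y)=F(a_1+N^m X,\,b_1+N^m Y)$ without fixed prime divisor and with all integer values coprime to $N$, together with an integer $N'\mid N$ (the product of the primes of $S_0$ ramifying in the relevant specializations), so that $\gcd(N',A)=1$. The change is in Step 3: for each $i$, rather than merely producing an $\ff_{p_i}$-point on $\widehat{F}(X,Y)=k_i\, N'^{-1}$ via the Hasse--Weil bound, I would lift it. For $k\ne 0$ the affine Thue curve $F(X,Y)=k$ is smooth away from finitely many primes -- at a singular point one would have $nF=X\partial_X F+Y\partial_Y F=0$, forcing $k\equiv 0$ as soon as $p\nmid n$ -- so, after possibly enlarging $S_0$ by the primes dividing $n$ and the relevant discriminants and resultants, every $\ff_{p_i}$-point supplied by Hasse--Weil is smooth, and Hensel's lemma lifts it to a $\zz_{p_i}$-point, in particular to a solution modulo $p_i^{m_i}$. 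By CRT these solutions, together with the mod-$N$ conditions -- which are the same regardless of the $p_i$ -- combine into a single congruence condition, so that one is reduced to finding integer specializations of a polynomial of the form $\widehat{F}(AX+a,\,AY+b)$ for suitable $a,b\in\zz$. This polynomial still has no fixed prime divisor: no prime $q\nmid AN$ can become one, since $t\mapsto a+At$ is bijective mod $q$; no $p_i$ can, since $a,b$ were chosen so the values are $\not\equiv 0$ mod $p_i$; and no $p\in S_0$ can, by Step 2.

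From here Steps 4 and 5 of the proof of Theorem \ref{thm:main}i) apply without change: by \cite[Theorem 3.2]{Poonen} (conditionally on abc; the technical hypothesis that $Y$ divides every irreducible factor is arranged by an affine shift as in that proof) this polynomial takes infinitely many squarefree values, and one may in addition take the value positive. For such a value the Specialization Inertia Theorem gives that the reduced discriminant of the resulting specialization $E_{t_0}/\qq$ is $N'$ times that squarefree value, hence $\equiv N'\cdot(N'^{-1}\overline{k})=\overline{k}$ mod $A$; in particular it is coprime to $A$, so $E_{t_0}/\qq$ is unramified at every prime dividing $A$ and therefore lies in $\mathcal{S}(E,A)$ with $\overline{\delta}(E_{t_0}/\qq)=\overline{k}$. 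Finally, the admissible $t_0$ form a positive-density subset of an arithmetic progression (even after fixing one coordinate), so Hilbert's irreducibility theorem ensures that all but a zero-density subset of them actually have Galois group $G$, as in the last step of the proof of Theorem \ref{thm:main}i) (cf.\ \cite[Theorem 1.1]{Debes17}, \cite[Theorem 4.2]{Koe18}). This yields surjectivity of $\overline{\delta}$.

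The only point needing input beyond CRT bookkeeping is the Hensel lifting in Step 3: one must know that the $\ff_{p_i}$-point handed over by the Hasse--Weil bound can be taken smooth, so that it lifts to $\zz_{p_i}$. This is exactly what the Euler identity $X\partial_X F+Y\partial_Y F=nF$ secures -- for good $p_i$ the whole affine curve $F=k$ (with $k\ne 0$) is smooth, so every $\ff_{p_i}$-point on it is smooth -- at the cost of a further (still finite, $E/\qq(T)$-dependent) enlargement of $S_0$, which is harmless since $A$ is assumed coprime to all such bad primes.
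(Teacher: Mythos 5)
Your proposal is correct and follows essentially the same route as the paper, which obtains Theorem \ref{thm:main_strong1} from the proof of Theorem \ref{thm:main}i) by exactly the two modifications you describe: a Hensel lifting on the curve $F(X,Y)-k=0$ in Step 3 to pass from $p$ to $p^m$, and a Chinese Remainder argument to combine the conditions at several primes with the (prime-independent) mod-$N$ conditions. Your explicit smoothness check via the Euler identity $X\partial_X F+Y\partial_Y F=nF$ is a welcome justification of the Hensel step that the paper leaves implicit.
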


We note furthermore that the conclusion of Theorem \ref{thm:main}i) is similar in nature to the one in \cite[Theorem 1.1]{Debes17} in the sense that both count $G$-extensions with certain local restrictions, arising via specialization of a given regular extension, although the local conditions involved are different ones. From the proofs of both theorems, it is not difficult to see that one can in fact combine those conditions to obtain the following conclusion (conditional on abc):

 \begin{theorem}
 \label{thm:main_strong2}
 Assume the abc-conjecture.\\
Let $E/\qq(T)$ be a $\qq$-regular $G$-extension, and $S_1$, $S_2$ be two finite {\textbf{disjoint}} sets of sufficiently large primes (with the implied lower bound depending on $E/\mathbb{Q}(T)$).
For each $p\in S_1$, choose a positive integer $e(p)$, a coprime residue class $k(p)$ mod $p^{e(p)}$; 
and for each $p\in S_2$, choose a conjugacy class $C(p)$ of $G$.\\
Then, among the specializations of $E/\qq(T)$, there exist infinitely many $G$-extensions $F/\qq$ (and in fact, asymptotically at least $B^\alpha$ of discriminant $\le B$, where $\alpha>0$ is a constant depending only on $E/\qq(T)$) such that
\begin{itemize}
\item[i)] the reduced discriminant of $F/\qq$ is congruent to $k(p)$ modulo $p^{e(p)}$, for all $p\in S_1$, and
\item[ii)] the Frobenius class at $p$ in $F/\qq$ equals $C(p)$, for all $p\in S_2$.
\end{itemize}
If furthermore $G$ is a perfect group, then the assertion holds even {\textbf{without}} the disjointness assumption.
\end{theorem}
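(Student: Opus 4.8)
The plan is to run the proof of Theorem \ref{thm:main}i) from Section \ref{sec:proof} essentially verbatim, while simultaneously imposing the additional splitting conditions for the primes in $S_2$ that encode the prescribed Frobenius classes, in the spirit of \cite[Theorem 1.1]{Debes17}. Recall that the condition ``the Frobenius class at $p$ in $F/\qq$ equals $C(p)$'' for an \emph{unramified} prime $p$ is, by the Specialization Inertia Theorem together with the standard comparison of specialization at $t_0$ with reduction mod $p$, equivalent to a \emph{congruence condition on $t_0$ modulo $p$} (or modulo a small power of $p$, after a Hensel lift): namely, $t_0$ must reduce to a $p$-adic point of the cover lying in the appropriate fibre component corresponding to $C(p)$, and such points exist in abundance over $\mathbb{F}_p$ once $p$ is large enough, again by the Hasse--Weil bound applied to the relevant (absolutely irreducible, bounded-genus) curve. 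This is exactly the same \emph{type} of input already used in Step 3 of the proof of Theorem \ref{thm:main}i).

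Concretely, I would carry out the following steps. First, absorb into the exceptional set $S_0$ (and the ``bad-prime'' modulus $N$) all primes for which the Specialization Inertia Theorem, the Frobenius-via-reduction comparison, or the relevant Hasse--Weil estimates could fail; since $S_1,S_2$ are required to consist of sufficiently large primes, we may assume $S_1\cup S_2$ is disjoint from $S_0$. Second, run Steps 1 and 2 of Section \ref{sec:proof} unchanged to eliminate fixed prime divisors of the ramification polynomial $F$ and to pin down the behaviour at the bad primes via a point $t_1$ that is $N$-adically close to an admissible base point. Third, for each $p\in S_1$ impose, exactly as in Step 3, the congruence on $t_0$ modulo $p^{e(p)}$ forcing $F(a_1+N^m\cdot,\ b_1+N^m\cdot)\equiv k(p)\cdot N'^{-1}$; and for each $p\in S_2$ impose the congruence on $t_0$ modulo $p$ forcing the reduction of $t_0$ into the component of the fibre realizing the Frobenius class $C(p)$ (equivalently, forcing $F(a_1+N^m\cdot,\ b_1+N^m\cdot)\not\equiv 0$ mod $p$ \emph{and} the associated splitting type of $p$ in $E_{t_0}/\qq$ to be $C(p)$). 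Because $S_1$ and $S_2$ are disjoint and the modulo-$N$ conditions are common to all of them, all of these conditions are simultaneously satisfiable by the Chinese Remainder Theorem, producing a single arithmetic-progression-type constraint $x_0\mapsto pX+a$, $y_0\mapsto pY+b$ (with $p$ now the product of the relevant prime powers) on the variables of $\widehat F$. Fourth, apply \cite[Theorem 3.2]{Poonen} (resp.\ \cite[Theorem 1]{Granv}) to this restricted two-variable polynomial, which still has no fixed prime divisor, to get a positive-density set of squarefree specialization values; combine with Hilbert irreducibility as in the ``last step'' of Section \ref{sec:proof} to guarantee that asymptotically at least $B^\alpha$ of the resulting $E_{t_0}/\qq$ have full Galois group $G$, with $\alpha>0$ depending only on the branch point data of $E/\qq(T)$. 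For such $t_0$, the reduced discriminant is $\delta = N'\cdot|F(\dots)|$ with $|F(\dots)|$ squarefree and prime to $\prod_{p\in S_1\cup S_2} p$, so conditions i) and ii) hold by construction.

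It remains to treat the case where $G$ is perfect but $S_1\cap S_2\neq\varnothing$. Here the obstruction is precisely that for a prime $p\in S_1\cap S_2$ we would need $p$ to be \emph{simultaneously} ramified in $E_{t_0}/\qq$ with prescribed effect on $\delta$ mod $p^{e(p)}$ \emph{and} to carry a prescribed ``Frobenius'' class $C(p)$ --- which for a ramified prime must be interpreted as the Frobenius of a chosen prime above $p$ in the residue extension, i.e.\ as a class in the decomposition group modulo inertia. When $G$ is perfect this is no longer an obstruction, because one can arrange the local behaviour at $p$ to be a point where the cover has bad reduction of the prescribed inertia/decomposition type: concretely, one replaces the smooth-fibre condition at $p$ by a condition forcing $t_0$ to reduce mod $p$ into (a suitable neighbourhood of) a branch point of $E/\qq(T)$ whose inertia generator lies in the right class, so that $p$ is tamely ramified in $E_{t_0}/\qq$ with the prescribed inertia, while the complementary generator of the decomposition group is read off from the reduction of the cover at that branch point; perfectness of $G$ guarantees (via the usual lifting/realizability arguments for decomposition groups of specializations, cf.\ the references in the paper) that every desired $(\text{inertia},\text{Frobenius})$ pair is attainable this way. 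One then again packages everything by CRT and runs Steps 1--4 above; the only novelty is the bookkeeping of the $p$-adic local condition at the primes in $S_1\cap S_2$.

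The main obstacle is this last point: verifying that for perfect $G$ the prescribed local conditions at a prime in $S_1\cap S_2$ (prescribed contribution to the reduced discriminant \emph{and} prescribed Frobenius-mod-inertia class) are jointly realizable by a $p$-adic specialization, and that this realizability is robust enough (for all large $p$) to be merged with the global squarefree-values machinery. Everything else is a routine, if somewhat lengthy, superposition of the already-established arguments of Section \ref{sec:proof} and of \cite[Theorem 1.1]{Debes17} via the Chinese Remainder Theorem.
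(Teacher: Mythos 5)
Your treatment of the disjoint case is correct and essentially identical to the paper's: condition ii) is imposed prime-by-prime as a mod-$p$ congruence on $t_0$ coming from $\mathbb{F}_p$-points of the twisted cover (as in \cite{Debes17}), and everything is merged with the machinery of Section \ref{sec:proof} by the Chinese Remainder Theorem.

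The perfect-group case, however, contains a genuine gap, and it starts with a misreading of the statement. For $p\in S_1\cap S_2$, condition i) prescribes the reduced discriminant to lie in a \emph{coprime} residue class mod $p^{e(p)}$, which forces $p$ to be \emph{unramified} in $E_{t_0}/\qq$ --- exactly as condition ii) does (Frobenius is only defined at unramified primes). There is no ``(inertia, Frobenius) pair'' to realize, and your proposed construction --- pushing $t_0$ mod $p$ into a neighbourhood of a branch point so that $p$ ramifies tamely with prescribed inertia --- would make $p\mid\delta$ and thereby \emph{violate} condition i). The real difficulty is different: both conditions at $p$ are congruence conditions on $t_0$ mod $p$, cutting out two subsets of $\mathbb{P}^1(\mathbb{F}_p)$ (the image of the $\mathbb{F}_p$-points of the Thue curve $F(X,Y)-kZ^d=0$, and the image of the $\mathbb{F}_p$-points of the twisted cover $\overline{\mathcal{Y}}$), and these subsets can be disjoint --- the paper's remark following the theorem exhibits exactly this for $G=C_2$, where both conditions concern unramified $p$. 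The paper's resolution, which is absent from your proposal, is to form the fiber product $\mathcal{C}\times_{\mathbb{P}^1}\overline{\mathcal{Y}}$ of the two covers: over $\overline{\mathbb{F}_p}(T)$ one function field extension is cyclic of degree $d=\deg F$ and the other is Galois with group $G$, so perfectness of $G$ (no nontrivial cyclic quotient) gives linear disjointness, hence absolute irreducibility of the fiber product, hence --- by Hasse--Weil and bounded genus --- a common $\mathbb{F}_p$-point $t_0$ satisfying both congruences at once. You correctly flagged this joint realizability as ``the main obstacle,'' but the argument you sketch for it does not work, and the actual mechanism by which perfectness enters is missing.
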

\begin{proof} 
Property ii) for sufficiently large primes $p$ is shown in \cite{Debes17}. Similarly as in the proof of Theorem \ref{thm:main}i), it crucially uses the existence of simple $\mathbb{F}_p$-points on a certain curve $\overline{\mathcal{Y}}$: precisely, let $f:\mathcal{X}\to \mathbb{P}^1$ be the $\qq$-$G$-cover corresponding to $E/\qq(T)$, let $f_p = f\otimes_{\qq} \qq_p$ be the base change to $\qq_p$, and let $\varphi: G_{\qq_p}\to \langle x\rangle$ be the unramified epimorphism mapping the Frobenius to $x\in C(p)$. Following the exposition in Section \ref{sec:twists}, the curve $\overline{\mathcal{Y}}$ corresponds to (the mod-$p$ reduction of) the twisted cover $\tilde{f_p}^{\varphi}: \mathcal{Y}\to \mathbb{P}^1$. Simple $\mathbb{F}_p$-points are then lifted to $\mathbb{Q}_p$-points with $\qq$-rational image $t_0$ under $\tilde{f_p}^{\varphi}$, yielding full mod-$p$ residue classes of specializations $t_0$ such that $E_{t_0}/\qq$ fulfills property ii) at the prime $p$. The fact that properties i) and ii) for several distinct primes are compatible is then just a Chinese Remainder argument as in Theorem \ref{thm:main_strong1}.

Assume now that $G$ is perfect. Then it remains to show that, for any sufficiently large prime $p$, Conditions i) and ii) may be imposed simultaneously.
To this end, consider the curve $\mathcal{C}$ given by $F(X,Y)-k = 0$ in Step 3 of the proof of Theorem \ref{thm:main}. $\mathcal{C}$ comes with a solvable degree-$d$ cover $\mathcal{C}\to \mathbb{P}^1$, as is obvious from the homogenization $F(X,Y)-kZ^d = 0$ (where $d:=\deg(F)$). It then suffices to show that this cover has some $\mathbb{F}_p$-point at the same value $t_0 (=(x_0:y_0))$ as the cover $\overline{\mathcal{Y}}\to \mathbb{P}^1$.  Since the fiber product $\mathcal{C}\times_{\mathbb{P}^1}\overline{\mathcal{Y}}$ is of bounded genus independently of $p$, showing that it is still absolutely irreducible will suffice to complete the proof, due to Hasse-Weil. The latter holds if the corresponding two function field extensions of $\mathbb{F}_p(T)$ are linearly disjoint even over $\overline{\mathbb{F}_p}$. But one of them becomes Galois with group $G$ over $\overline{\mathbb{F}_p}(T)$, whereas the other one becomes cyclic of degree $d$. Since $G$ is perfect, these are linearly disjoint, completing the proof.
\end{proof}

\begin{remark}
Note that the assumption ``$G$ perfect" cannot be dropped for the last assertion. Indeed, let $G=C_2$.
Since for any squarefree $d\in \zz$, the reduced discriminant $\delta$ of $\qq(\sqrt{d})$ is one of $\pm d$ or $\pm 2d$, 
conditions of type i) such as $\delta \equiv k$ mod $p$, and of type ii) such as $\left(\frac{d}{p}\right) = 1$ are in general not compatible (for the same prime $p$), e.g., as soon as $k$ is a non-square mod $p$, whereas $-1$ and $2$ are squares.
\end{remark}

 We conclude this section by considering the ``usual" (i.e., non-reduced) discriminant rather than the reduced one. Statements in the direction of Theorem \ref{thm:main} have to be more intricate when considering the non-reduced discriminant, since this depends on the precise inertia groups and on the permutation action of the Galois group rather than just on the set of ramified primes. We have the following analog of Theorem \ref{thm:main_strong1}.
 \begin{theorem}
 \label{thm:main_strong3}
 Assume the abc-conjecture.\\
  Let $G\le S_n$, let $E/\qq(T)$ be a $\qq$-regular $G$-extension, and let $e:=\gcd\{{\textrm{ind}}(\sigma_i): i\in \{1,...,r\}\}$, where $\sigma_1,\dots, \sigma_r$ are the inertia group generators at the branch points of $E/\qq(T)$. Let $A$ be an integer, not divisible by finitely many primes depending only on $E/\qq(T)$.
 Let $\mathcal{S}(E,A)$ denote the set of all (degree-$n$) $G$-extensions of $\qq$ which arise via specialization of $E/\qq(T)$ and which are unramified at all prime divisors of $A$.
 Let $\overline{\Delta}:\mathcal{S}(E,A) \to (\zz/A\zz)^\times$ denote the mod-$A$ reduction of the discriminant map. Then the image of $\overline{\Delta}$ contains a non-empty union of full cosets of $((\zz/A\zz)^\times )^e$ inside $(\zz/A\zz)^\times $. 
 If furthermore, $E/\qq(T)$ possesses a trivial specialization $E_{t_0} = \qq$, then the image of $\overline{\Delta}$ contains $((\zz/A\zz)^\times )^e$.
 \end{theorem}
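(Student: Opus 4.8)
The plan is to rerun the proof of Theorem~\ref{thm:main}i) almost verbatim, now tracking the full discriminant instead of the reduced one, with one extra geometric input inserted in its third step. The key new point is the multiplicative structure of the non-reduced discriminant of a specialization. After a preliminary linear fractional change of $T$ I may assume $\infty$ is not a branch point, and I carry out the normalizations of Section~\ref{sec:proof} so that $F=\prod_{i=1}^s f_i$ has no fixed prime divisor and the primes of the exceptional set $S_0$ do not divide $F(a_1,b_1)$ for the base point $t_1$. If $t_0=(x_0:y_0)$ is a non-branch point with $F(x_0,y_0)$ squarefree and coprime to $S_0$ --- exactly the situation produced in Step~4 of the proof of Theorem~\ref{thm:main} --- then for every prime $p\mid f_i(x_0,y_0)$ (necessarily $p\notin S_0$, so tamely ramified) the inertia generator $\tau$ at $p$ in the degree-$n$ specialization $E_{t_0}/\qq$ is, by the refined Specialization Inertia statement (see the references cited in Section~\ref{sec:proof}), conjugate in $G$ to a power $\sigma^c$ of the inertia generator $\sigma$ of one of the branch points in the $i$-th Galois orbit, with $\gcd(c,\operatorname{ord}\sigma)=1$. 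Since a power of a permutation by an exponent coprime to its order has the same cycle type, $\textrm{ind}(\tau)=\textrm{ind}(\sigma)=:n_i$ depends only on $i$; in particular $e=\gcd_{i=1}^s n_i$. By the tame discriminant formula $v_p(d_{E_{t_0}})=\textrm{ind}(\tau)$ one gets
$$d_{E_{t_0}/\qq}=(-1)^{r_2(E_{t_0})}\cdot c_0\cdot\prod_{i=1}^s |f_i(x_0,y_0)|^{n_i},$$
where $c_0$ is the contribution of the primes of $S_0$, a fixed positive integer, constant over the $N$-adic neighbourhood of $t_1$ by Krasner's lemma. Writing $n_i=e\,m_i$ with $\gcd_i m_i=1$, the product equals $w^e$ for $w:=\prod_i|f_i(x_0,y_0)|^{m_i}$, so $\overline{\Delta}(E_{t_0})\equiv \pm\,c_0\,w^e\pmod A$.

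It then suffices to make $w$ --- equivalently $\prod_i f_i(x_0,y_0)^{m_i}$ --- run through every class in $(\zz/A\zz)^\times$. For this I would replace the prescription ``$F(X,Y)\equiv k$'' of Step~3 of the proof of Theorem~\ref{thm:main} by ``$\prod_i f_i(X,Y)^{m_i}\equiv v$'', for each $v\in(\zz/A\zz)^\times$. The auxiliary curve $C_v\colon \prod_i f_i(X,Y)^{m_i}=v\,Z^{M}$, $M=\sum_i m_i\deg f_i$, is a degree-$M$ Kummer cover of the $(X:Y)$-line; by the Kummer irreducibility criterion it is geometrically irreducible because $h(t):=\prod_i f_i(t,1)^{m_i}$ is not a perfect $\ell$-th power in $\overline{\qq}(t)$ for any prime $\ell$ --- the $f_i(t,1)$ being separable, pairwise coprime and nonconstant (this is where $\infty\notin\{\text{branch points}\}$ enters) and $\gcd_i m_i=1$ --- and its genus is bounded in terms of the $f_i,m_i$, hence of $E/\qq(T)$, only. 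Hasse--Weil then gives, for all primes $p$ outside a finite set depending only on $E/\qq(T)$ and all $v\in\mathbb{F}_p^\times$, a smooth $\mathbb{F}_p$-point of $C_v$ off $\{Z=0\}\cup\bigcup_i\{f_i=0\}$. A Chinese Remainder combination over the primes dividing $A$, a Hensel lift to the prime-power moduli dividing $A$ (as in Theorem~\ref{thm:main_strong1}), the squarefreeness criterion of Poonen/Granville under the abc-conjecture, and Hilbert's irreducibility theorem then produce, for each prescribed $v$, a degree-$n$ $G$-extension $E_{t_0}/\qq\in\mathcal S(E,A)$ with $F(x_0,y_0)$ squarefree and $\prod_i f_i(x_0,y_0)^{m_i}\equiv v\pmod A$.

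Finally I would pin down the two signs. For the first assertion, confine $t_0$ in addition to a fixed half-line, say $x_0\to+\infty$ with $y_0$ fixed: then each $\operatorname{sign}(f_i(x_0,y_0))$ equals the sign of the leading $X$-coefficient of $f_i$, and $(-1)^{r_2(E_{t_0})}$ stabilizes to $(-1)^{r_2(E_\infty)}$ (a genuine specialization, since $\infty$ is not a branch point), both independent of $t_0$; over this subfamily $\overline{\Delta}(E_{t_0})=\epsilon\,c_0\,w^e$ for a fixed sign $\epsilon$ while $w$ still runs through all of $(\zz/A\zz)^\times$, so the image of $\overline{\Delta}$ contains the full coset $\epsilon\,c_0\,((\zz/A\zz)^\times)^e$ --- a non-empty union of cosets of $((\zz/A\zz)^\times)^e$. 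For the second assertion, suppose $E/\qq(T)$ specializes trivially at some $t_0^{\mathrm{triv}}\in\mathbb{P}^1(\qq)$ (moved, by the above linear fractional change, to a finite point). Choosing $t_1$ $N$-adically close to $t_0^{\mathrm{triv}}$ forces every specialization in that neighbourhood to be unramified at all primes of $S_0$, hence $c_0=1$; and confining $t_0$ to a small real interval around $t_0^{\mathrm{triv}}$ --- now letting both $x_0,y_0$ vary and using the two-variable form of Poonen's theorem so that $t_0$ stays bounded --- forces $r_2(E_{t_0})=r_2(\qq)=0$, hence sign $+1$. Then $\overline{\Delta}(E_{t_0})\equiv w^e$ with $w$ arbitrary in $(\zz/A\zz)^\times$, so $((\zz/A\zz)^\times)^e$ lies in the image of $\overline{\Delta}$.

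The main obstacle, and the only step going beyond the proof of Theorem~\ref{thm:main}, is the discriminant decomposition of the first paragraph: verifying that the inertia index attached to a prime detected by a given $f_i$ is independent of that prime (the equal-cycle-type fact for coprime powers, which is what singles out $e=\gcd_i\textrm{ind}(\sigma_i)$ as the right invariant), and that, together with the primes of $S_0$ and the archimedean place, the prefactor $\pm c_0$ is truly constant over the constructed family. The remaining steps are a transcription of Section~\ref{sec:proof} together with the two-variable refinements already recorded at the start of Section~\ref{sec:strong}.
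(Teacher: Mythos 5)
Your proposal is correct and follows essentially the same route as the paper: the tame-discriminant formula $v_p(\Delta)={\textrm{ind}}(\tau)$ identifying the exponent attached to each $f_i$ with $\nu_i={\textrm{ind}}(\sigma_i)$, the replacement of the auxiliary curve by $\prod_i f_i^{\nu_i/e}=v$ (your $C_v$ is exactly the absolutely irreducible component $\tilde{C}$ the paper extracts, irreducible because $\gcd_i(\nu_i/e)=1$ prevents the polynomial from being a proper power), and the archimedean/bad-prime prefactor $\pm c_0$ matching the paper's modified $N'$, which becomes trivial in the presence of a trivial specialization. Your explicit bookkeeping of the sign via $(-1)^{r_2}$ and a real neighbourhood is just a spelled-out version of the paper's footnote on restricting to an open real interval.
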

 \begin{proof}
 First recall that the multiplicity of any tamely ramified prime $p$ in a $G$-extension equals the index $ind(\sigma)$ of its inertia group generator $\sigma$. Indeed,  the ramification indices $e_{\mathfrak{p}}$ at prime ideals $\mathfrak{p}$ extending $p$ in that extension are exactly the orbit lengths of the inertia subgroup at $p$ (e.g., \cite[Thm.\ I.9.1]{MM}), and the sum $\sum_{\mathfrak{p}|p} (e_{\mathfrak{p}}-1)$ equals the exponent of $p$ in the discriminant by Dedekind's Different Theorem, e.g., \cite[Thm.\ III.2.6]{Neukirch}. On the other hand, due to tame ramification, $\sum_{\mathfrak{p}|p} (e_{\mathfrak{p}}-1)$ is also exactly the index of an inertia group generator at $p$, since the index of an $e_{\mathfrak{p}}$-cycle is $e_{\mathfrak{p}}-1$.
 
 Therefore, after a suitable modification of the definition of $N'$ in Step 3,\footnote{In particular, since the non-reduced discriminant comes with a sign in general, one should include that sign in the definition of $N'$ and then add an archimedean condition, i.e., restrict specialization values to a certain open real interval in order to preserve the sign.} the only necessary adaptations in the proof of Theorem \ref{thm:main} are the following: instead of considering the curve $F(X,Y)-k = 0$ over $\mathbb{F}_p$, with the squarefree homogeneous polynomial $F$, one now should consider the curve $C: (\prod_{i=1}^s f_i(X,Y)^{\nu_i}) -k = 0$, where $F = \prod_{i=1}^s f_i$ is a factorization into irreducible homogeneous polynomials and $\nu_i = \textrm{ind}(\sigma_i)$ is the index of an inertia group generator corresponding to $f_i$.
 Obviously, this curve cannot have an $\mathbb{F}_p$-rational point if $k\in \mathbb{F}_p^\times$ is not an $e$-th power. Conversely, as soon as $k = \kappa^e$ with $\kappa\in \mathbb{F}_p^\times$, this curve has a component $\tilde{C}: (\prod_{i=1}^s f_i(X,Y)^{\nu_i/e}) -\kappa = 0$. Since $\gcd\{\nu_1/e,\dots, \nu_s/e\} =1$ by definition, the polynomial $\prod_{i=1}^s f_i(X,Y)^{\nu_i/e}$ is not a proper power, and therefore $\tilde{C}$ is absolutely irreducible. For the proof of the first assertion, the rest of the argument remains unchanged. For the second one, note that the existence of a trivial specialization ensures that in the second step of the proof of Theorem \ref{thm:main}, one may restrict (via Krasner's lemma) to specializations in which all ``bad" primes are unramified, yielding $N'=1$ in the third step. This means that the coset of $((\zz/A\zz)^\times )^e$ occurring in the first assertion can be chosen to be the coset of the identity.
 \end{proof}
 
 In the light of Theorem \ref{thm:main_strong3}, the following analog of Conjecture \ref{conj:localmalle} for non-reduced discriminants suggests itself:
 \begin{conjecture}
 \label{conj:localmalle2}
 Let $G\le S_n$ be a transitive permutation group, and let $e_0 = \gcd\{{\textrm{ind}}(g): g\in G\setminus\{1\}\}$.
 Given any prime $p$, not in some finite set $S_0$ depending on $G$, and an integer $k \not\equiv 0$ mod $p$,
there exists a $G$-extension of $\qq$ whose discriminant $\Delta$ fulfills $\Delta \equiv k^{e_0}$ mod $p$. \end{conjecture}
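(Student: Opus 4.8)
The plan is to re-run the proof of Theorem~\ref{thm:main}i) essentially verbatim, modifying only the auxiliary curve produced in its third step, once one records how the non-reduced discriminant reads off the inertia indices. The required input is: if a prime $p$ is tamely ramified in a $G$-extension $L/\qq$ with inertia group generator $\sigma$, then the exact power of $p$ dividing $\Delta(L/\qq)$ equals $\mathrm{ind}(\sigma)$. This combines the identification of the ramification indices $e_{\mathfrak{p}}$ of the primes $\mathfrak{p}\mid p$ with the orbit lengths of an inertia subgroup at $p$ (\cite[Thm.\ I.9.1]{MM}), Dedekind's different theorem, which gives the $p$-exponent of $\Delta$ as $\sum_{\mathfrak{p}\mid p}(e_{\mathfrak{p}}-1)$ in the tame case (\cite[Thm.\ III.2.6]{Neukirch}), and the elementary observation that this sum is exactly $\mathrm{ind}(\sigma)$, since $\sigma$ acts as a product of cycles of lengths $e_{\mathfrak{p}}$ and an $m$-cycle has index $m-1$.

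Granting this, carry out Steps~1--2 of the proof of Theorem~\ref{thm:main}i) unchanged, obtaining the polynomial $\widehat F$ and the integer $N'$ assembled from the ``bad'' primes that ramify in the relevant specializations. For a specialization $E_{t_0}/\qq$ with $\widehat F(x_0,y_0)$ squarefree and coprime to $N$, the fact above yields the factorization $\Delta(E_{t_0}/\qq)=\pm N'\cdot\prod_{i=1}^{s}|f_i(x_0,y_0)|^{\nu_i}$, where $F=\prod_i f_i$ is the factorization into irreducible homogeneous polynomials and $\nu_i=\mathrm{ind}(\sigma_i)$ for the inertia generator $\sigma_i$ at the branch point cut out by $f_i$; the now genuine sign is absorbed into $N'$ together with an archimedean condition confining $t_0$ to a fixed open real interval, exactly as in the footnote to the proof, and this does not disturb the density statements. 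Consequently, in Step~3 the curve $F(X,Y)-k=0$ must be replaced by $C\colon\prod_{i=1}^{s}f_i(X,Y)^{\nu_i}-k=0$.

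The arithmetic of $C$ is the crux. Put $e=\gcd\{\nu_1,\dots,\nu_s\}$ and $g:=\prod_i f_i^{\nu_i/e}$, so that $\prod_i f_i^{\nu_i}=g^{e}$; hence $C$ has no $\mathbb{F}_p$-point unless $k$ is an $e$-th power modulo $p$, which is precisely the source of the ``$e$-th power coset'' restriction in the statement. If $k=\kappa^{e}$ with $\kappa\in\mathbb{F}_p^{\times}$, then $C$ contains the Thue-type component $\widetilde C\colon g(X,Y)-\kappa=0$, and since $\gcd\{\nu_i/e\}=1$ by definition of $e$, the form $g$ is not a proper power, so $\widetilde C$ is absolutely irreducible (\cite[Thm.\ VI.9.1]{Lang}) of genus bounded only in terms of $\deg F$ and the $\nu_i$, hence independently of $p$ and $k$. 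By Hasse--Weil, outside a finite set of primes $\widetilde C$ has a smooth affine $\mathbb{F}_p$-point; lifting it $p$-adically and combining over the prime divisors of $A$ by the Chinese remainder theorem, together with Hensel lifting for prime powers as in Theorem~\ref{thm:main_strong1}, shows that the values of $\prod_i f_i(x_0,y_0)^{\nu_i}$ attainable modulo $A$ are exactly $((\zz/A\zz)^{\times})^{e}$. Feeding these residue conditions into \cite[Theorem~3.2]{Poonen} (conditional on abc) as in Steps~3--5 of the proof of Theorem~\ref{thm:main}, and discarding by Hilbert irreducibility the density-zero set of specializations that do not realize all of $G$, one finds that the image of $\overline\Delta$ contains the coset $(\pm N'\bmod A)\cdot((\zz/A\zz)^{\times})^{e}$; ranging over the admissible base points $t_1$, each with its own $N'$, gives a non-empty union of such cosets. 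For the second assertion, a trivial specialization allows one to choose $t_1$ (via Krasner's lemma) so that no bad prime ramifies, forcing $N'=1$; after fixing the sign, the coset in question is that of the identity, i.e.\ $((\zz/A\zz)^{\times})^{e}$ itself lies in the image.

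I expect the only genuinely delicate part to be the bookkeeping around the sign and the bad-prime factor $N'$: one must verify that the archimedean interval condition, the mod-$N$ conditions, the mod-$A$ conditions and the positive-density squarefree-values input of \cite{Poonen} and \cite{Granv} are all simultaneously satisfiable, and that multiplication by the fixed unit $\pm N'\bmod A$ merely translates the subgroup $((\zz/A\zz)^{\times})^{e}$ without destroying its coset structure. The absolute-irreducibility and bounded-genus claims for $\widetilde C$, although essential, are routine once one notices that the hypothesis $\gcd\{\nu_i/e\}=1$ is exactly what prevents $g$ from being a proper power.
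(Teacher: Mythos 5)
Your proposal reproduces, essentially verbatim, the paper's own argument (the proof of Theorem~\ref{thm:main_strong3}): the same tame-ramification computation identifying the $p$-exponent of $\Delta$ with $\mathrm{ind}(\sigma)$, the same replacement of the curve $F(X,Y)-k=0$ by $\prod_i f_i(X,Y)^{\nu_i}-k=0$, the same absolutely irreducible component $\prod_i f_i(X,Y)^{\nu_i/e}-\kappa=0$ justified by $\gcd\{\nu_i/e\}=1$, and the same use of a trivial specialization to force $N'=1$ and land in the identity coset. As in the paper, this yields Conjecture~\ref{conj:localmalle2} (conditionally on abc) only for those $G$ admitting a $\qq$-regular realization whose inertia-index gcd equals $e_0$ and which possesses a trivial specialization.
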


Theorem \ref{thm:main_strong3} then answers Conjecture \ref{conj:localmalle2} in the positive (conditional on abc), as long as $G$ occurs as the Galois group of a regular Galois extension $E/\qq(T)$ satisfying the following two conditions:
\begin{itemize}
\item[i)] The gcd of indices of inertia group generators in $E/\qq(T)$ equals $e_0$, i.e., is smallest possible.
\item[ii)] $E/\qq(T)$ possesses a trivial specialization $E_{t_0} = \qq$.
\end{itemize}
Condition i) is fulfilled if $E/\qq(T)$ contains every non-identity conjugacy class as a class of inertia group generators; however, this very strong condition is usually far from necessary. E.g., for degree-$n$ extensions with group $A_n$, it is sufficient if $E/\qq(T)$ has a branch point with inertia group generated by a $3$-cycle; such extensions are known to exist due to a famous construction by Mestre (see e.g. \cite[Chapter IV.5.4]{MM}). Theorem \ref{thm:main_strong3}, together with the obvious fact that every $A_n$-discriminant is a square, then yields that Conjecture \ref{conj:localmalle2} is true for $G=A_n$. The same holds for $G=S_n$, with a similar, but easier construction (see, e.g., Lemma 5.1 in \cite{Koe18}).
 
 \section{Unconditional results}
 \label{sec:uncond}
 It may be desirable to have a variant of Theorem \ref{thm:main} which holds without assuming the abc-conjecture, at the cost of certain extra conditions on the given function field extensions. This is obtained by means of the following lemma, of which the special case $G=C_2$ is once again contained in \cite{KP18} (Theorem 3). 
 \begin{lemma}
 \label{lem:degle3}
 Let $E/\qq(T)$ be a $\qq$-regular Galois extension all of whose branch points have degree at most $6$ over $\qq$. Then the assertions of Theorems \ref{thm:main}i), \ref{thm:main_strong1}, \ref{thm:main_strong2} and \ref{thm:main_strong3} hold, not conditional on the abc-conjecture. 
 \end{lemma}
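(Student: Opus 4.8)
The plan is to isolate the single point at which the abc-conjecture enters the proofs of Theorems~\ref{thm:main}i), \ref{thm:main_strong1}, \ref{thm:main_strong2} and \ref{thm:main_strong3}, and to replace the tool used there by an unconditional one that becomes available under the stated degree restriction. Going through Section~\ref{sec:proof}, abc is invoked exactly once --- in the fourth step, via \cite[Theorem~3.2]{Poonen} (equivalently \cite[Theorem~1]{Granv}) --- to secure a positive density of integer pairs at which a certain two-variable polynomial takes squarefree values. Everything else is unconditional: the elimination of fixed prime divisors (Step~1), the localization at the bad primes via Krasner's lemma (Step~2), the construction of the necessary $\mathbb{F}_p$-points on the relevant curve via the Hasse--Weil bound together with their Hensel lifts to $\qq_p$ where needed (Step~3, covering also the congruence refinements and the Frobenius variant of Section~\ref{sec:strong}), and the final count of $G$-specializations via Hilbert irreducibility. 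So it suffices to supply the squarefree input unconditionally.

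The key observation is that, in all four statements, the polynomial whose squarefree values are needed is (an affine shift of) the binary form $F=\prod_{i=1}^{s} f_i$, where $f_i\in\zz[X,Y]$ is the homogenized minimal polynomial of a branch point $t_i$; hence every irreducible factor $f_i$ of $F$ has $\deg f_i=[\qq(t_i):\qq]\le 6$ by hypothesis. In Theorem~\ref{thm:main_strong3} the indices $\nu_i=\mathrm{ind}(\sigma_i)$ enter only through the mod-$p$ curve $C\colon\prod f_i^{\nu_i}-k=0$ of Step~3 and not through the squarefree requirement, which still concerns $F$ itself: squarefreeness of $F(x_0,y_0)$ is exactly what forces each ramified prime to be tame, with inertia generator in the class of $\sigma_i$ and exponent $\nu_i$ in the discriminant. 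The bound $\deg f_i\le 6$ on irreducible factors is preserved by the coordinate changes made earlier in the proof: the scaling $T\mapsto N_0T$ of Lemma~\ref{lem:pol} replaces $f_i(X,Y)$ by $f_i(X,N_0Y)$, still homogeneous of degree $\deg f_i$ and still irreducible, and the M\"obius substitution of Step~2 replaces $f_i$ by $f_i(a_1+N^mX,\,b_1+N^mY)$, of the same total degree and still irreducible; the subsequent restriction modulo $p$ does not alter the factorization type over $\qq$. Moreover, Steps~1--3 already arrange that $F$ (and each shifted version of it) has no fixed prime divisor, a fortiori no fixed square divisor.

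With this in place I would replace \cite[Theorem~3.2]{Poonen} by Greaves' unconditional theorem on power-free values of binary forms (\cite{Greaves}): a binary form over $\zz$ without repeated factor, all of whose irreducible factors have degree at most $6$ and which has no fixed square divisor, takes squarefree values on a set of coprime integer pairs of positive density, and this remains true after restricting the pairs to a fixed admissible residue class to a given modulus and to a fixed angular sector --- the latter handling the sign condition needed for Theorem~\ref{thm:main_strong3}. Applied to $F$ together with the congruences modulo $N^m$ (bad primes, Step~2) and modulo $p$, resp.\ $p^{e(p)}$ (Step~3 and the Hensel refinement of Section~\ref{sec:strong}), this yields the required positive density of $(x_0,y_0)$ with $F(x_0,y_0)$ squarefree and in the prescribed classes. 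Since one can no longer fix one of the coordinates (the one-variable squarefree sieve being unconditional only up to degree $3$), the concluding step of Section~\ref{sec:proof} now works with the resulting two-parameter family of specialization points; this only produces more specializations, and the counting argument of \cite[Theorem~1.1]{Debes17} and \cite[Theorem~4.2]{Koe18} adapts to this setting and still delivers at least $B^\alpha$ distinct $G$-extensions of discriminant $\le B$, with $\alpha>0$ depending only on $E/\qq(T)$. Substituting this into the proofs of Section~\ref{sec:proof} gives the four conclusions unconditionally.

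I expect the only real work --- rather than a genuine obstacle --- to be the bookkeeping of the second paragraph: making sure that the operative hypothesis is that every irreducible factor of $F$ has degree $\le 6$ (equivalently, every branch point has degree $\le 6$), and not a bound on $\deg F$ itself, and that this is not disturbed when one passes back and forth between the homogeneous form $F$ and the inhomogeneous shifted polynomial of Step~4, nor by any of the coordinate changes, nor by the congruence and sign restrictions. Once this is verified, Greaves' theorem slots in precisely where abc was used, and no further change to the argument is needed.
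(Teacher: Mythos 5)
Your proposal is correct and takes essentially the same route as the paper: identify the sole use of abc (the squarefree sieve in Step~4) and replace Poonen/Granville by the unconditional squarefree-values theorem for binary forms whose irreducible factors have degree at most $6$, applied to the shifted form with modulus $M=N^mp$. The only nuance is attribution --- the paper notes that Greaves' result carries a technical extra condition and cites Xiao \cite{YX} for the full statement needed here --- but this does not affect the argument.
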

 \begin{proof}
 The following is known unconditionally (see \cite{Greaves} with some technical extra condition, and \cite{YX} in full): If $F(X,Y)\in \zz[X,Y]$ is a binary form without repeated factors and with no irreducible factor of degree $>6$, and if $A,B,M$ are positive integers such that no integer square $>1$ divides all values of $F(MX+A, MY+B)$, then the set of $(x_0,y_0)\in \zz^2$ such that $F(Mx_0+A, My_0+B)$ is squarefree is infinite; more precisely, the cardinality of its intersection with $\{-n,\dots, n\}^2$ is asymptotically bounded from below by $cn^2$, for some positive constant $c$.\footnote{In particular, this density result implies also that the quotient $\frac{Mx_0+A}{My_0+B}$ may be chosen of arbitrarily large absolute value which is technically needed in Step 4 to control the sign of the values of $F$.}\\
 The abc-conjecture was used in the proof of Theorem \ref{thm:main}i) only to ensure that a certain polynomial $F(a_1+N^m (pX+a), b_1+N^m (pY+b))$
 (where the homogeneous polynomial $F$ is the product of minimal polynomials of the branch points), known by construction not to possess any fixed square divisors, obtains squarefree values for a subset of the arguments which is of larger order of growth then the ``exceptional Hilbert set" (i.e., the set of specialization values which change the Galois group). We may therefore replace the abc-conjecture by the above result, with $M = N^mp$, $A=a_1+N^ma$ and $B=b_1+N^mb$, to obtain the assertion. 
 \end{proof} 
  
 Many groups have realizations fulfilling the assumptions of Lemma \ref{lem:degle3}, making the assertion of Theorem \ref{thm:main}i) true at least for {\textit{some}} $\qq$-regular $G$-extension.
 
 \begin{theorem}
 \label{thm:examples}
Let $G$ be a direct product of finitely many factors, all of which are one of 
$S_n$, $A_n$ ($n\ge 3$); $PSL_2(p)$ (where $p\in \mathbb{P}$ is such that $\left(\frac{2}{p}\right)=-1$ or $\left(\frac{3}{p}\right)=-1$);   or any sporadic simple group other than $M_{23}$. 
 Then there exists a $\qq$-regular $G$-extension $E/\qq(T)$ fulfilling the assertion of Theorem \ref{thm:main}, not conditional on the abc-conjecture.
In particular, Conjecture \ref{conj:localmalle} holds for $G$.
\end{theorem}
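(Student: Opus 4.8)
The plan is to invoke Lemma \ref{lem:degle3} together with known regular realizations of these groups having branch points of small degree, then close under direct products. First I would recall that, by Lemma \ref{lem:degle3}, it suffices to exhibit for each listed group a $\qq$-regular Galois extension $E/\qq(T)$ all of whose branch points have degree $\le 6$ over $\qq$ (for the last assertion, about Conjecture \ref{conj:localmalle}, one additionally needs the $K(C)$-rational branch point hypothesis of Corollary \ref{cor1}, which I would verify is automatic here because in all these cases one can arrange at least one $\qq$-rational branch point, or appeal directly to the unconditional part of Lemma \ref{lem:degle3} feeding into Theorem \ref{thm:main}ii), noting that $\mathcal{P}(G)$-primes are handled by the unconditional assertion ii) of Theorem \ref{thm:main}). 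For $S_n$ and $A_n$ one uses the classical rigidity-based realizations: $S_n$ is realized by a $\qq$-regular extension with three branch points (e.g.\ $0,1,\infty$, all $\qq$-rational) via the polynomial construction in \cite[Lemma 5.1]{Koe18} or \cite[IV.5]{MM}, and $A_n$ likewise via Mestre's construction \cite[IV.5.4]{MM}, which can be taken with a $3$-cycle inertia generator and a small number of $\qq$-rational (or at worst low-degree) branch points. For $PSL_2(p)$ with $\left(\frac 2p\right)=-1$ or $\left(\frac 3p\right)=-1$, and for the sporadic groups other than $M_{23}$, one invokes the known rigidity/rationality realizations collected in \cite{MM}: each such group is realized by a $\qq$-regular extension with three branch points, and under the stated quadratic-residue conditions (resp.\ for those sporadic groups) the relevant conjugacy classes are $\qq$-rational, forcing the three branch points themselves to be $\qq$-rational --- in particular of degree $1 \le 6$.

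Next I would handle the closure under direct products. Given $\qq$-regular $G_i$-extensions $E_i/\qq(T)$ for $i=1,\dots,m$, each with low-degree branch points, I would produce a $\qq$-regular $\prod_i G_i$-extension with low-degree branch points by a standard argument: first apply distinct fractional linear transformations over $\qq$ (i.e.\ replace the parameter $T$ by $\gamma_i(T)$ for suitable $\gamma_i \in PGL_2(\qq)$) so that the branch loci of the $E_i$ become pairwise disjoint as subsets of $\mathbb{P}^1(\overline\qq)$ --- this is possible since each branch locus is finite and $\qq$ is infinite, and a $\qq$-rational fractional linear change preserves both $\qq$-regularity and the degrees of branch points. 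The compositum $E_1\cdots E_m$ over $\qq(T)$ then has Galois group $\prod_i G_i$ (the groups are forced to be ``independent'' because their branch loci are disjoint, so no proper subdirect product can occur --- an inertia generator at a branch point of $E_i$ lies in $G_i$ and is trivial in the other factors, and these generate each $G_i$), it remains $\qq$-regular (the constant field of a compositum of $\qq$-regular extensions with disjoint branch loci is still $\qq$), and its branch points are exactly the union of the (transformed) branch points of the $E_i$, hence all of degree $\le 6$. Thus Lemma \ref{lem:degle3} applies to the compositum, giving the unconditional conclusion of Theorem \ref{thm:main}i) for $G = \prod_i G_i$.

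Finally I would deduce the last sentence: applying Corollary \ref{cor1} (whose abc-hypothesis is now removed by Lemma \ref{lem:degle3}) to the extension just constructed --- which can be arranged to have a $\qq$-rational branch point, hence a $K(C)$-rational branch point for every $C$ --- yields Conjecture \ref{conj:localmalle} for $G$. The main obstacle I anticipate is not the direct-product bookkeeping, which is routine, but the factual verification that each listed simple group really does admit a $\qq$-regular realization with all branch points of degree $\le 6$: this rests on sifting through the rigidity tables in \cite{MM} and checking the rationality of the relevant triples of conjugacy classes (in particular explaining why $M_{23}$ must be excluded --- no suitable $\qq$-rigid triple with a rational, sufficiently low-degree branch configuration is available), and on confirming that Mestre's $A_n$-construction and the $S_n$-construction genuinely fall under the degree-$\le 6$ umbrella rather than merely under the weaker abc-conditional statement. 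I would therefore devote the bulk of the written proof to a case-by-case citation of these realizations, and treat the direct-product step briefly.
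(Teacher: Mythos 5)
Your proposal follows essentially the same route as the paper: reduce to a single direct factor, invoke the three- or four-branch-point rigidity realizations collected in \cite{MM}, and feed the result into Lemma \ref{lem:degle3} (with the rational branch point supplying part ii) of Theorem \ref{thm:main}); the direct-product step via disjoint branch loci is the standard argument the paper only gestures at. Two corrections, though. First, your claim that the quadratic-residue conditions force the three branch points of the $PSL_2(p)$-realizations to be $\qq$-rational is not accurate: as with $A_n$ inside $S_n$, the paper obtains these groups by descending from a three-rational-point realization of an index-$2$ overgroup ($PGL_2(p)$, resp.\ $S_n$), and the descended $G$-realization typically has one rational branch point plus a conjugate quadratic pair. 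This does no harm, and the ``sifting through rigidity tables'' you flag as the main obstacle is unnecessary: since the branch locus of a $\qq$-cover is $G_\qq$-stable, every branch point of an $r$-branch-point realization has degree at most $r$ over $\qq$, so any three- or four-point realization automatically meets the degree-$\le 6$ hypothesis of Lemma \ref{lem:degle3}. Second, $M_{23}$ is excluded simply because it is not known to be a regular Galois group over $\qq$ at all, not because its available branch configurations are unsuitable.
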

\begin{proof}
Firstly, we may obviously reduce to the case of a single factor, simply because the regular inverse Galois problem is well-behaved under taking direct products.
Then, for all the groups given above, the rigidity method and certain extensions provide regular Galois extensions over $\qq$ with either three or four branch points; and from the ramification data it is always obvious that there exists at least one rational branch point (yielding conclusion ii) of Theorem \ref{thm:main}). 
Indeed, apart from the groups $M_{11}$ and $M_{24}$, the realizations in \cite{MM} yield three-branch-point realizations $E/\qq(T)$ for either $G$ itself or some group $\Gamma$ containing $G$ of index $2$ (see in particular Theorem I.7.9 and Section II.9 of \cite{MM} for the linear and sporadic groups respectively). In the latter case, one deduces suitable $G$-realizations via standard arguments. We illustrate this with the case $G=A_n$.

It is well-known that the polynomial $X^n - T(nX-(n-1))$ defines a $\qq$-regular $S_n$-extension $E/\qq(T)$ with three rational branch points ($0$, $\infty$ and $1$) and with inertia groups generated by an $n$-cycle, an ($n-1$)-cycle and a transposition respectively. Since exactly two of the inertia group generators lie outside of $A_n$ (namely the transposition and one of the other two, depending on whether $n$ is even or odd), the quadratic extension $E^{A_n}/\qq(T)$ is ramified at exactly two rational points, and is therefore a rational function field, say $\qq(S)$. Then, $E/\qq(S)$ is an $A_n$-extension ramified only over the points extending $T\mapsto 0$ and $T\mapsto \infty$ (which is a total of three points, one of them rational, due to the ramification conditions in $\qq(S)/\qq(T)$).

Finally, for $M_{11}$ and $M_{24}$, $\qq$-regular realizations with four branch points (including a rational one) are given in \cite{Koe_m11} and \cite[III.7.5]{MM} respectively.
 \end{proof}
 
\section{On the set of exceptional primes}
\label{sec:exc}
Next, we consider more closely the set $S_0$ of exceptional primes in Conjecture \ref{conj:localmalle}. We do not try to be exhaustive in this section, but rather to include examples which are interesting from a group-theoretic or arithmetic-geometric point of view.
 In general, $S_0$ cannot be expected to be empty, since the trivial local obstructions $0$ mod $p$ for $p\notin \mathcal{P}(G)$ may yield modulo-$q$ conditions for all prime divisors of the discriminant of a $G$-extension, where $q$ is a prime divisor of $|G|$. Since those obstructions only concern prime divisors of $|G|$, it may be expected that Conjecture \ref{conj:localmalle} can be strengthened by adding the assumption $S_0\subset \{p\in \mathbb{P}: |G| \equiv 0 \ {\textrm{ mod }}\  p\}$.  The following result confirms this expectation for Galois groups of {\textit{Belyi maps}} $f:X\to \mathbb{P}^1$ defined over $\qq$ (upon possibly adding the prime $7$, which is however a divisor of $|G|$ anyway for many interesting groups). Recall that $f$ is called a Belyi map if it is ramified exactly at $0$, $1$ and $\infty$.
 \begin{theorem}
   \label{thm:explicit}
 Let $G$ be a finite group. Assume that  there exists a $\qq$-$G$ cover $f:X\to \mathbb{P}^1$ which is a Belyi map 
 and a ``good model"\footnote{See \cite[Def.\ 2.2]{Beckmann}. Instead of going into details of the definition of a good model, we note here that this is automatically fulfilled if $G$ has trivial center, and also (after a certain twist) if $Z(G)$ is of exponent $2$, see \cite[Prop. 2.4]{Beckmann}.}  over $\mathbb{Q}$.
 Then Conjecture \ref{conj:localmalle} holds (unconditionally) for $G$ with the exceptional set $S_0$ chosen to be the set of prime divisors of $7|G|$.
   (and one may restrict to $G$-extensions arising as specializations of $f$). 
 \end{theorem}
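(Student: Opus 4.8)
The plan is to re-run the proof of Theorem \ref{thm:main} --- in the unconditional form supplied by Lemma \ref{lem:degle3} --- for the $\qq$-regular $G$-extension $E/\qq(T)$ attached to $f$, and to keep careful track of which primes are forced into the exceptional set $S_0$. Since $f$ is a Belyi map, its branch points are $0,1,\infty$, all of degree $1$ over $\qq$; hence Lemma \ref{lem:degle3} applies and makes the entire argument unconditional, and moreover every branch point is $K(C)$-rational for every conjugacy class $C$ (because $\qq\subseteq K(C)$), so the hypothesis of Theorem \ref{thm:main}ii) holds and the residue class $k=0$ for $p\in\mathcal{P}(G)$ is covered as well. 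Thus Conjecture \ref{conj:localmalle} holds for $G$ once we check that every exceptional prime produced by the proof divides $7|G|$.

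The ramification form here is the explicit binary cubic $F(X,Y)=XY(X-Y)$ (the product of the homogenized minimal polynomials of $0,\infty,1$); it has content $1$, no square fixed divisor, and its only fixed prime divisor is $2$, so the change of variable in Step 1 of the proof of Theorem \ref{thm:main} can be taken to be $T\mapsto 2T$, which replaces $F$ by $XY(X-2Y)$ (now without fixed prime divisor) and introduces no exceptional prime other than $2$. In the Specialization Inertia Theorem, the exceptional set for a $\qq$-$G$-cover admitting a good model over $\qq$ consists, by \cite[Prop.\ 4.2]{Beckmann}, of the prime divisors of $|G|$ together with primes at which two branch points collide or a branch point degenerates; for $\{0,1,\infty\}$ the latter never happens, so this contributes only divisors of $|G|$. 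Finally, in Step 3 one needs, for every $k\in\mathbb{F}_p^{\times}$, a non-trivial $\mathbb{F}_p$-point on the Thue curve $F(X,Y)=k$, whose projective model is the plane cubic $XY(X-Y)=kZ^3$. For $p\neq 3$ and $k\neq 0$ this cubic is smooth, hence of genus $1$, with exactly the three $\mathbb{F}_p$-rational points $(1{:}0{:}0),(0{:}1{:}0),(1{:}1{:}0)$ at infinity; by Hasse's bound it has at least $p+1-2\sqrt p$ points, and $p+1-2\sqrt p>3$ as soon as $p\ge 11$, yielding the required affine point. For the remaining small primes one checks directly that $xy(x-2y)$ (equivalently $xy(x-y)$) runs over all of $\mathbb{F}_p^{\times}$ as $(x,y)$ ranges over pairs with $x,y,x-y$ all nonzero when $p=3$ and when $p=5$; at $p=7$ this fails (for instance $xy(x-2y)$ omits the cube-coset $\{2,5\}$ modulo $7$), which is precisely why $7$ must be adjoined to $S_0$. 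Since $p=2$ is handled next, $S_0$ may be taken to be the set of prime divisors of $7|G|$.

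It remains to explain why $2\mid|G|$, so that the prime $2$ is indeed covered by the divisors of $7|G|$. Because $f$ is a $\qq$-$G$-cover which is a Belyi map, the branch cycle lemma forces each inertia conjugacy class $C_i$ ($i\in\{0,1,\infty\}$) to be rational, i.e.\ $C_i^{m}=C_i$ for all $m$ coprime to $|G|$; in particular $C_0$, which is non-trivial since $f$ ramifies over $0$, is a real class ($g\sim g^{-1}$ for $g\in C_0$). A finite group with a non-trivial real conjugacy class has even order: if $hgh^{-1}=g^{-1}$ then $h^2\in C_G(g)$, and in a group of odd order $\langle h^2\rangle=\langle h\rangle$, so $h\in C_G(g)$, forcing $g=g^{-1}$ and hence $g=1$. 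Thus $2\mid|G|$.

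The step most in need of care is the input from \cite{Beckmann}: one must make sure that, for a Belyi map admitting a good model over $\qq$, the Specialization Inertia Theorem introduces no exceptional prime beyond the divisors of $|G|$ --- this is exactly where the ``good model'' hypothesis, together with the $\qq$-rationality and pairwise distinctness of $0,1,\infty$, is used. The other ingredients (the genus-$1$ Hasse--Weil estimate, the finite verification at $p=3,5,7$, and the parity argument above) are routine, and I expect no genuine obstacle beyond this bookkeeping, since the rationality of all three branch points removes the delicate points of the general argument.
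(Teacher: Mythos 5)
Your proof is correct and follows essentially the same route as the paper: the same ramification form $XY(X-Y)$ shifted to $XY(X-2Y)$ to kill the fixed divisor $2$, the same analysis of the Specialization Inertia Theorem's exceptional set via the good-model hypothesis, the same genus-$1$ Hasse--Weil count for $p\ge 11$ with the cube-map bijection handling $p\in\{3,5\}$ and $p=7$ as the genuine outlier, and the same appeal to Lemma \ref{lem:degle3} and to rational branch points for $k=0$. The only (harmless) divergence is that you prove $2\mid|G|$ directly from the branch cycle lemma and real conjugacy classes, where the paper simply cites D\`ebes--Fried.
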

 \begin{proof}
 Let the $\qq$-regular Galois extension $E/\qq(T)$ be the function field extension of $f$.
 Once again, we need to make suitable modifications to the proof of Theorem \ref{thm:main}. From the proof, the only primes $p$ that need to be exempted are those which are exceptional for the Specialization Inertia Theorem, and the ones such that a certain curve $F(X,Y)-k = 0$ does not have any non-trivial $\mathbb{F}_p$-points.
 
 First, let us describe explicitly the former set of primes. Since $E/\qq(T)$ is ramified exactly at $T\mapsto 0$, $T\mapsto 1$ and $T\mapsto \infty$, the polynomial $F$ is actually $XY(X-Y)$. We may then make the finite set $S_0$ of exceptional primes for the Specialization Inertia Theorem explicit: all exceptional primes $p$ are either divisors of $|G|$, or are primes of ``vertical ramification", 
 or ramified in the residue field of some branch point, 
 or such that two or more branch points coincide modulo $p$ (see, e.g., \cite[Section 2.2.2]{Legrand}). 
With our assumptions, the last two cases obviously do not occur, whereas the second one does not occur (outside the set of prime divisors of $|G|$) due to the definition of a good model. 
This shows that the only exceptional primes that need to be excluded due to the Specialization Inertia Theorem are the prime divisors of $|G|$.

 Next, we deal with the second class of primes. These are the primes $p$ not dividing $|G|$ for which the curve $\mathcal{C}: F(X,Y) - k=0$ is either not absolutely irreducible over $\mathbb{F}_p$ or does not possess non-trivial $\mathbb{F}_p$-points. 
 Note that the polynomial $XY(X-Y)$ has a fixed divisor $2$, whence, for the sake of preserving the general outline of the proof in Section \ref{sec:proof}, we use a linear transformation to turn the branch points into $T\mapsto 0$, $T\mapsto 2$ and $T\mapsto \infty$, which gets rid of fixed divisors at the cost of making $2$ a bad prime.
 Note also that due to our assumptions, $2$ must divide $|G|$, since a $\qq$-$G$-cover for an odd order group $G$ cannot have a rational (or even real) branch point, cf.\ \cite[Corollary 1.3]{DF90}.
So consider the polynomial $F=XY(X-2Y)$, which for any $p\ne 2$ is a separable cubic polynomial modulo $p$. 
Note also that due to our assumptions, $2$ divides $|G|$, since a $\qq$-$G$-cover for an odd order group $G$ cannot have a rational (or even real) branch point, cf.\ \cite[Corollary 1.3]{DF90}.
For $p> 3$, the curve $\mathcal{C}$ is then of genus $1$, 
  since its definition yields a degree-$3$ cover $\mathcal{C}\to \mathbb{P}^1$, ramified (totally) over exactly three points (namely the points corresponding to $Z=0$ in the homogenized equation $F(X,Y) - kZ^3=0$. Since $\mathcal{C}$ is absolutely irreducible for $p\not{|} |G|$, Hasse's bound then yields that $\mathcal{C}$ has at least $p+1 - 2\sqrt{p}$ $\mathbb{F}_p$-points, of which only three are trivial. For $p\ge 11$, this yields non-trivial points, and in fact for $p\in \{3,5\}$, non-trivial $\mathbb{F}_p$-points $(x:y:z)$ exist as well (e.g. with $x=y \ne 0$,
   since $z\mapsto z^3$ is then bijective on $\mathbb{F}_p^\times$). 
   This shows altogether that all exceptional primes divide $7|G|$, whence the assertion for $k\ne 0$ mod $p$ follows from Lemma \ref{lem:degle3}. Finally, the case $k\equiv 0$ mod $p$ follows from the existence of a rational branch point, as in Theorem \ref{thm:main}ii).
 \end{proof}

 \begin{example}
 \label{ex:monster}
 A purely group-theoretical way to obtain Belyi maps over $\qq$ is via the rigidity method (see, e.g., Chapter I of \cite{MM}). A famous example concerns the sporadic Monster group $M$. This group is known to fulfill all the assumptions of Theorem \ref{thm:explicit} due to Thompson (\cite{Thompson}), and has order divisible by $7$. Therefore Conjecture \ref{conj:localmalle} holds (unconditionally) for $M$ outside of the prime divisors of $|M|$. 
 \end{example}

\begin{example}
\label{ex:an}
The assumption of rational branch points in Theorem \ref{thm:explicit}, implicit in the definition of a Belyi map over $\qq$, can be relaxed somewhat. For example, let $G=A_n$. The $A_n$-extension occuring in the proof of Theorem \ref{thm:examples} can be verified to be ramified (up to fractional linear transformation) exactly at $0$ and either $\pm\sqrt{(-1)^{(n-1)/2}n}$ or $\pm\sqrt{(-1)^{(n-2)/2}(n-1)}$, depending on whether $n$ is odd or even (see also the appendix of \cite{MM} for the explicit corresponding $A_n$-polynomial). The set of exceptional primes for the Specialization Inertia Theorem is then still only the set of prime divisors of $|G|$. The rest of the proof of Theorem \ref{thm:explicit} goes through without changes. In particular, for all $n\ge 7$, Conjecture \ref{conj:localmalle} holds for $A_n$, outside the set of primes $\le n$.\footnote{A database search of $A_n$-extensions for small $n$ quickly confirms that in fact the restriction $n\ge 7$ is not necessary.}
\end{example}

 In certain scenarios, we can do even better regarding the restriction of the set of exceptional primes. As an example we deal with the case of {\textit{polynomial maps}} $\mathbb{P}^1\to \mathbb{P}^1$, which is of importance e.g.\ in arithmetic dynamics.
 
 \begin{theorem}
 \label{thm:pol}
Assume that the $\qq$-regular $G$-extension $E/\qq(T)$ is the Galois closure of a rational function field extension $\qq(x)/\qq(T)$ corresponding to a degree-$n$ {\textit{polynomial}} map $f:\mathbb{P}^1\to \mathbb{P}^1$, $x\mapsto q(x)$ with $q \in \qq[X]$. Assume that either the abc-conjecture holds, or $q$ has at most $7$ distinct roots in $\overline{\qq}$. Then Conjecture \ref{conj:localmalle} holds for $G$ with exceptional set the prime divisors of $n$.
 \end{theorem}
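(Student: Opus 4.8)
The plan is to specialize $f$ exactly as in the proof of Theorem~\ref{thm:main}i), but to exploit two features peculiar to polynomial maps: the point $\infty$ is always a $\qq$-rational branch point, totally ramified and (for $p\nmid n$) tamely ramified, with inertia an $n$-cycle; and the remaining branch data is highly constrained when $q$ has few distinct roots. After a coordinate change on the source and target $\mathbb{P}^1$ — which replaces $q$ by a polynomial defining an isomorphic cover and merely relabels which $t_0$ yields which specialization, hence alters neither $G$ nor the set of specializations and their reduced discriminants — we may assume $q\in\zz[X]$ is monic. Its branch points are then $\infty$ together with the finite critical values $q(\beta)$, $q'(\beta)=0$. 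Here is the combinatorial point behind the hypothesis ``$q$ has at most $7$ distinct roots'': writing $q'$ as the product of the repeated roots of $q$ and of $r-1$ further roots $\beta_1,\dots,\beta_{r-1}$ (counted with multiplicity), where $r$ is the number of distinct roots of $q$, the finite critical values are exactly $0$ (present iff $q$ is non-squarefree) together with the values $q(\beta_j)$; since $\beta\mapsto q(\beta)$ is $G_\qq$-equivariant, every critical value other than the rational value $0$ lies in a Galois orbit of size at most $r-1$. Thus, when $r\le 7$, \emph{every} branch point of $f$ has degree at most $6$, and the ramification polynomial $F(X,Y)$ — the product of the homogenized minimal polynomials of the branch points, including the factor $Y$ coming from $\infty$ — is a squarefree binary form all of whose irreducible factors have degree $\le 6$.

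Next I would make the Specialization Inertia Theorem explicit for $f$. For $p\nmid n$ the reduction $\bar q\in\mathbb{F}_p[X]$ is monic of degree $n$ with $\bar q'\neq 0$, so $f$ has good reduction at $p$ as a cover, tamely ramified at $\infty$; by the standard machinery (\cite[\S 2.2]{Legrand}, cf.\ \cite{Beckmann}) the only exceptional primes for the Specialization Inertia Theorem are the prime divisors of $n$ together with a finite set $S_0$ of ``bad'' primes (critical values colliding modulo $p$, wild ramification above a finite critical point, and the like). As in the proof of Theorem~\ref{thm:main}, the primes of $S_0$ are absorbed in the Krasner step: one fixes $t_0$ modulo a high power of the product of the primes in $S_0$, so that the $p$-adic behaviour at each $p\in S_0$ is constant, and chooses that residue so that $E_{t_0}/\qq$ is unramified at every $p\in S_0$ — here one uses precisely that a polynomial map, having a totally tamely ramified point above $\infty$ when $p\nmid n$, admits an unramified local specialization at $p$. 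Hence the bad primes never have to be \emph{excluded}, and the exceptional set of Conjecture~\ref{conj:localmalle} reduces to the prime divisors of $n$.

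It then remains, following the third, fourth and last steps of the proof of Theorem~\ref{thm:main}i), to realize each prescribed residue class $k\not\equiv 0\bmod p$ (for $p\nmid n$) as a value of $F$ at a non-trivial $\mathbb{F}_p$-point of the Thue curve $F(X,Y)=k$, and then to pass to squarefree integer values of $F$ along the resulting progression. For $p$ above a bound depending only on $\deg F$ — and, since $r\le 7$ makes $\deg F$ bounded by an absolute constant, therefore for all but an absolutely bounded set of $p\nmid n$ — the required point is furnished by the Hasse--Weil bound applied to the absolutely irreducible curve $F(X,Y)-kZ^{\deg F}=0$ of bounded genus; for the remaining small $p\nmid n$ one produces the point by hand, just as $p\in\{3,5\}$ were treated in the proof of Theorem~\ref{thm:explicit}, using the factor $Y$ of $F$ and the freedom in the second coordinate. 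The squarefree-value input is \cite[Thm.~3.2]{Poonen}/\cite[Thm.~1]{Granv} under abc, and, unconditionally when $r\le 7$, the degree-$\le 6$ theorem of \cite{Greaves}/\cite{YX} used in Lemma~\ref{lem:degle3}; Hilbert irreducibility then restricts to specializations with group $G$ and gives the $\gg B^{\alpha}$ count as before. Finally, the case $k\equiv 0\bmod p$ (for $p\in\mathcal P(G)$, $p\nmid n$) is immediate from the $\qq$-rational branch point $\infty$ via the argument of Theorem~\ref{thm:main}ii).

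The hardest part is the assertion that the exceptional set is \emph{exactly} the prime divisors of $n$, rather than those together with some further absolute finite set of primes. This comes down to (i) forcing the specialization to be unramified at each $p\nmid n$, so the primes of $S_0$ are absorbed and never excluded, and (ii) the explicit construction of non-trivial $\mathbb{F}_p$-points on $F=k$ for the finitely many small $p\nmid n$. Both are of the same nature as maneuvers already used for Theorem~\ref{thm:explicit}, but they genuinely rely on the special geometry of a polynomial map — the totally, tamely ramified rational branch point above $\infty$ — and so require some care.
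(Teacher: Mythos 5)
Your outline follows the right general strategy (reduce to squarefree values of the ramification form $F$, use $r\le 7$ to bound the degrees of its irreducible factors by $6$, handle $k\equiv 0$ via the rational branch point at $\infty$), but it misses the one idea that makes the sharp exceptional set possible, and the two steps you substitute for it do not work. First, for a target prime $p\nmid n$ that happens to lie in the bad set $S_0$ of the Specialization Inertia Theorem, you propose to ``absorb'' $p$ in the Krasner step and merely arrange that the specialization is unramified at $p$. But the Krasner step fixes $t_0$ modulo a high power of $p$, i.e.\ fixes the coprime pair $(x_0,y_0)$ modulo $p^M$ up to a unit scalar $\lambda$; hence $F(x_0,y_0)\equiv \lambda^{\deg F}F(a_1,b_1) \bmod p$ ranges only over the coset $F(a_1,b_1)\cdot(\mathbb{F}_p^\times)^{\deg F}$, and the reduced discriminant $\delta = N'\cdot|F(x_0,y_0)|$ is confined to that coset mod $p$. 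Unramifiedness at $p$ only gives $\delta\not\equiv 0$; it does not let you hit an arbitrary prescribed $k\bmod p$. Second, for small good primes $p$ your production of non-trivial $\mathbb{F}_p$-points on $F(X,Y)=k$ ``by hand, using the factor $Y$'' is not justified: for a general form $F$ of degree $D$ and $p$ small relative to $D$, the curve can simply miss the value $k$ (there are only $p(p-1)$ affine points with $y\ne 0$), and the Hasse--Weil bound is vacuous there.

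The paper resolves both problems simultaneously by a rescaling that you do not perform: after Lemma \ref{lem:pol} one sets $S:=T\cdot(pN_0)^n$, so that $\tilde q(X)=(pN_0)^n q(X/(pN_0))\equiv X^n\bmod p$. Consequently (i) all finite branch points of $E/\qq(S)$ collide with $0$ modulo $p$, so $F(X,Y)\equiv cX^dY\bmod p$ and the relevant curve $cX^dY=k$ is rational of genus $0$ with non-trivial $\mathbb{F}_p$-points for \emph{every} $p$ and every $k\ne 0$ --- no Hasse--Weil threshold and no ad hoc small-prime arguments; and (ii) the local behaviour at $p$ is read off directly from the explicit polynomial ($\tilde q(X)-s_0\equiv X^n-s_0$ is separable mod $p$ whenever $v_p(s_0)=0$ and $p\nmid n$, and is $p$-Eisenstein when $v_p(s_0)=1$, covering $k\equiv 0$), so the target prime $p$ is never handled by Krasner and the residue of $F(x_0,y_0)$ mod $p$ remains a free parameter; Krasner is applied only to the bad primes \emph{other than} $p$. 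Without this (or an equivalent device), your argument proves the statement only with exceptional set $S_0\cup\{\,\ell : \ell\mid n\,\}$ enlarged by the small primes where the point count fails, which is the conclusion of Lemma \ref{lem:degle3}, not of Theorem \ref{thm:pol}.
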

 \begin{proof}
  By assumption, $E/\qq(T)$ is the splitting field of a degree-$n$ polynomial $q(X)-T\in \qq(T)[X]$, where $q(X)$ is monic and integral without loss, via suitable scalings in $X$ and $T$. 
 Let $F(X,Y)\in \zz[X,Y]$ be the homogeneous polynomial corresponding to the branch point locus of $E/\qq(T)$, as before. 

Now fix a prime $p$ not dividing $n$. To get rid of fixed prime divisors of $F(X,Y)$, we use Lemma \ref{lem:pol}. Let $N_0$ be as in that lemma, and set $S:=T\cdot (pN_0)^n$. Then $E/\qq(S)$ is the splitting field of $(pN_0)^n\cdot q(\frac{X}{pN_0}) - S$, and by Lemma \ref{lem:pol}, the ramification polynomial for this extension, which we denote again by $F(X,Y)$, has no fixed divisors. Also note that the polynomial $\tilde{q}(X):=(pN_0)^n\cdot q(\frac{X}{pN_0})$ is still monic and integral, and with all except the leading coefficient divisible by $p$.
 We fix a residue class $k$ mod $p$, and as before want to find specializations of $E/\qq(S)$ with reduced discriminant congruent to $k$ mod $p$. If $k\equiv 0$ mod $p$, simply specialize $S\mapsto s_0$ of $p$-adic valuation $1$, which yields a $p$-Eisenstein polynomial. If $k\ne 0$ mod $p$, every specialization $S\mapsto s_0$ of $p$-adic valuation $0$ leads to a separable polynomial $f(X)-s_0$ modulo $p$, i.e., an extension unramified at $p$ (we have used here that $p$ does not divide $n$). 
Since $\tilde{q}(X) \equiv X^n$ mod $p$, all finite branch points of $E/\qq(S)$ coincide with $S\mapsto 0$ modulo $p$, so the modulo-$p$ reduction of $F(X,Y)$ equals $X^dY$ (for some $d\in \nn$). We then need to find non-trivial $\mathbb{F}_p$-points on the curve given by $X^dY-k=0$.\footnote{Note that the inseparability of $F$ mod $p$ means that $p$ is a bad prime! This is however not a problem here, since we have controlled the behaviour at $p$ via an explicit polynomial.} This, however, is a rational genus-$0$ curve, and therefore automatically has non-trivial $\mathbb{F}_p$-points! We therefore obtain a mod-$p$ congruence condition for specialization values $s_0=(x_0:y_0)$, and we can add another congruence condition (as in Step 2 in Section \ref{sec:proof}) in order to control the ramification behaviour at bad primes {\textit{other than}} $p$.
 Using once again the abc-conjecture, we find many squarefree values of $F$ under the above congruence conditions; and unconditionally for $q$ with at most $7$ distinct roots (as in Lemma \ref{lem:degle3}), since by the Riemann-Hurwitz formula $E/\qq(S)$ can then have at most $6$ branch points other than the rational branch points at $S\mapsto \infty$ and (possibly) $S\mapsto 0$. This concludes the proof.
 \end{proof}
 
 \begin{remark}
 \label{rk:pol0}
 The structure of the Galois group of a polynomial map as in Theorem \ref{thm:pol} is somewhat restricted. In particular, in the case of {\textit{indecomposable}} polynomials, a full classification of all possibilities was obtained by M\"uller in \cite{Mue}. Nevertheless, in the decomposable case, interesting classes of groups, such as iterated wreath products of symmetric groups, occur.
 \end{remark}
 
 \begin{remark}
 \label{rk:pol}
 It is an easy exercise to verify that the conclusion of Theorem \ref{thm:pol} remains true when the polynomial $q(X)$ is replaced by a rational function of the form $\frac{q(X)}{X}$, the exceptional set $\{p\in \mathbb{P} : p|n\}$ is replaced by $\{p\in \mathbb{P} : p|n-1\}$, and the maximal number of distinct roots of $q$ in the unconditional case is replaced by $6$.\footnote{The only relevant change in the proof concerns generation of extensions ramified at $p$; here the Eisenstein argument should be replaced with a generalization involving Newton polygons.}
 \end{remark}
 
 We end with an example where we can completely get rid of exceptional primes.
 \begin{example}
 \label{ex:sn}
 Let $n\ge 3$, and let $E/\qq(T)$ be the splitting field of the trinomial $X^n+TX+T$. This is $\qq$-regular with Galois group $S_n$, see the proof of Theorem \ref{thm:examples}.
 Furthermore $E/\qq(T)$ is the splitting field of both a polynomial of the form $q(X)-T$ and one of the form $\tilde{q}(X)-TX$, with $q,\tilde{q}\in \qq[X]$, via linear transformations (this can also be seen from the ramification data, since the ramification type is of genus $0$, containing both an $n$-cycle and an $n-1$-cycle).
 It then follows from Theorem \ref{thm:pol} and Remark \ref{rk:pol} that Conjecture \ref{conj:localmalle} holds for $S_n$ with no exceptional primes, and one may even restrict to trinomial extensions. Note also that this result is unconditional, since $E/\qq(T)$ has only three branch points.
\end{example}
 
 {\textbf{Acknowledgement}}: I would like to thank David Krumm and Fran\c{c}ois Legrand, as well as the anonymous referee for helpful comments and suggestions.

 \end{document}